\documentclass[11pt,a4paper]{article}
\usepackage[utf8]{inputenc}			
\usepackage[T1]{fontenc}            
\usepackage{a4wide}
\usepackage{amssymb,amsmath,amsthm,bm}
\usepackage{mathtools,lmodern,enumitem}
\usepackage[english]{babel}

\usepackage{color}
\usepackage{tikz}
\usepackage{amsfonts,epsf,amsmath,comment}
\usepackage{lineno}
\usepackage{comment}

\usepackage{subfig}
\usepackage{epic}
\usepackage{amssymb}
\usepackage{latexsym}
\usetikzlibrary{decorations.markings}
\usetikzlibrary{decorations.pathreplacing}

\usetikzlibrary{arrows}
\newcommand{\R}{R}
\newcommand{\B}{B}
\newcommand{\K}{\mathcal K}
\newcommand{\T}{\mathcal T}
\newcommand{\dR}{d_{\R}}

\usetikzlibrary{arrows}
\usepackage{graphicx}

\usepackage{hyperref}

\usepackage{comment}

\newtheorem{theorem}{Theorem}
\newtheorem{corollary}[theorem]{Corollary}
\newtheorem{lemma}[theorem]{Lemma}
\newtheorem{proposition}[theorem]{Proposition}

\newtheorem{rem}[theorem]{Remark}
\newtheorem{conjecture}[theorem]{Conjecture}

\newtheorem{problem}[theorem]{Problem}

\newtheorem{definition}[theorem]{Definition}

\newcommand{\dedicatory}[1]{
  \begin{center}
    \vspace{-1em}
    \textit{#1}
    \vspace{1.5em}
  \end{center}
}

\def\marrow{{\marginpar[\hfill$\longrightarrow$]{$\longleftarrow$}}}

\def\zoltan#1{{\sc Zolt\'an says: }{\textcolor{red}{\marrow\sf #1}}}

\newcommand{\mailto}[1]{\href{mailto: #1}{#1}}

\title{Two-coloring cubic graphs with small monochromatic components, but without singletons}
\author{János Barát\thanks{HUN-REN Alfr\'ed R\'enyi Institute of Mathematics, Budapest, Hungary and University of Pannonia, Veszprém, Hungary, \mailto{barat@mik.uni-pannon.hu}
}
\and
Zoltán L. Blázsik\thanks{Bolyai Institute, University of Szeged, Aradi v\'ertan\'uk tere 1, 6720
Szeged, Hungary. \newline University of Johannesburg Auckland Park, 2006 South Africa, \mailto{blazsik@server.math.u-szeged.hu}
}
}

\begin{document}
\thispagestyle{empty}
\maketitle

\dedicatory{Dedicated to Professor Tibor Szabó on the occasion of his 60th birthday}

\begin{abstract}
We combine two coloring aspects that work in opposite directions. 
One can 2-color the vertices of a cubic graph such that each monochromatic component is very small.
One can also 2-color the vertices of a cubic graph such that each monochromatic component has degree at least 1.
As an intended tool for solving a special case of Wegner's conjecture, Thomassen formulated a conjecture that combined the two previous properties.
This led to the concept of a crumby coloring.
However it turned out that there are cubic graphs without such coloring.
Here we try to see what natural relaxations of the original concept might hold for each cubic graph.

We show there exists a constant $c$ such that every cubic graph has a vertex 2-coloring such that every monochromatic component has at least 2 and at most $c$ vertices.
We also prove an unbalanced version, which is the natural relaxation of the crumby coloring.

\end{abstract}


\section{Introduction}

Thomassen \cite{ct17} formulated the following conjecture: Every $3$-connected cubic graph has a red-blue vertex coloring such that the blue subgraph has maximum degree at most $1$
(that is, it consists of a matching and some isolated vertices) and the red
subgraph has minimum degree at least $1$ and contains no $3$-edge path.
Since all monochromatic components are small in this coloring and there is a certain irregularity,
Barát \cite{bj} coined the name {\em crumby coloring} for this concept.
Bellitto, Klimo\v sová, Merker, Witkowski and Yuditsky \cite{counter}  constructed cubic and subcubic graphs that do not admit  crumby colorings.
We still suspect that similar colorings exist for all cubic graphs.
Let us relax the condition on the size of a red component. 
A red-blue vertex coloring is $\ell$\emph{-crumby} if we require the same for the blue components as before and the red connected components must have size at least $2$ and do not have paths of length~$\ell$. 

There are several notions that are closely related to crumby colorings.
A \emph{$k$-bisection} of a graph $G$ is a partition of its vertex set $V(G)$ into two sets $V_1$ and $V_2$ such that $||V_1|-|V_2| |\le 1$ and every connected component of $G[V_i]$ has at most $k$ vertices, where $G[V_i]$ denotes the subgraph of $G$ induced by $V_i$ (for $i=1,2$). 
Cui and Liu~\cite{cl} proved that every subcubic graph admits a 3-bisection $(V_1,V_2)$ with an additional property that every connected component of $G[V_i]$ ($i=1,2$) is acyclic. This extended the result of Esperet, Mazzuoccolo and Tarsi~\cite{emt} who showed that every cubic graph admits such a 3-bisection.

Another closely related notion is \emph{relaxed two-coloring}.  In a $(C_1, C_2)$-relaxed coloring
of a graph $G$ every monochromatic component induced by vertices of
the first (second) color is of order at most $C_1$ ($C_2$, respectively). 
Berke and Szabó~\cite{bsz} were mostly concerned with $(1, C)$-relaxed colorings, in other words when is it possible to break up a graph into small components with the removal of an independent set.
They proved that every graph of maximum degree at most three can be
$(1, 22)$-relaxed colored. 

A third similar notion investigated by Cranston and Yancey \cite{cranston} is an $(I,F_3)$-coloring. That is, the blue graph consists of singletons and the red graph is a forest, where each component has at most 3 vertices. However, the red forest might have singletons. This is very similar to a crumby coloring. 


Our first result concerns two-colorings of cubic graphs such that every monochromatic component has order at least $2$. The aim is to give an upper bound on the size of the monochromatic components.

\begin{theorem}\label{thm:pm}
    If $G$ is a cubic graph that has a perfect matching, then there is a red-blue vertex coloring of the vertices such that every monochromatic component has at least $2$ and at most $12$ vertices. 
\end{theorem}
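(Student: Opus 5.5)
The plan is to reduce the problem to one about a $4$-regular multigraph and then use a max-cut type extremal coloring. Fix a perfect matching $M$ of $G$. We insist that the two endpoints of every edge of $M$ receive the same color. Then every monochromatic component of $G$ contains a whole matching edge, so it has at least $2$ vertices automatically. Contract every edge of $M$ to get a multigraph $H$ on $|V(G)|/2$ vertices. $H$ is $4$-regular, and may have parallel edges coming from triangles and $4$-cycles of $G$ through matching edges; it has no loops. A red-blue coloring of $H$ lifts to a coloring of $G$ of the required type, and a monochromatic component with $k$ vertices in $H$ becomes one with exactly $2k$ vertices in $G$. So it suffices to show that $H$ has a $2$-coloring in which every monochromatic component has at most $6$ vertices.

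For $H$ I would take a coloring minimizing the number of monochromatic edges, counted with multiplicity; this is a maximum cut. By the standard switching argument, every vertex has at most $2$ neighbours of its own color, counted with multiplicity, since otherwise recoloring it decreases the number of monochromatic edges. Hence every monochromatic component of $H$ has maximum degree at most $2$, so it is a path, a cycle, or a doubled edge. The task is then to bound the length of these paths and cycles.

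Among all maximum cuts I would choose one minimizing a secondary potential, for example $\sum_K |K|^2$ over monochromatic components $K$, or lexicographically the vector of component sizes. The key observation is that an interior vertex $v$ of a long monochromatic path or cycle has exactly $2$ neighbours of each color. Recoloring $v$ therefore keeps the cut optimal, splits its long component, and merges $v$ into the components of its two other-colored neighbours. The main step is to show that when a component has at least $7$ vertices, some interior vertex can be chosen so that this merge creates only components smaller than the one destroyed, contradicting minimality. Since $v$ has only two neighbours on the other side, the new component containing $v$ has size at most $1+|A|+|B|$, where $A$ and $B$ are the components of those two neighbours. This bound must be played against the split of the long component into two pieces. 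Choosing $v$ near the middle of a long path or cycle, and using that each of $A$ and $B$ is itself a path or cycle whose size is bounded by the current maximum, should allow a counting argument.

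I expect this last step to be the main obstacle. The exact constant $6$ depends on a careful case analysis of where $v$ sits and on what the neighbouring components look like. A naive potential may fail when $A$ and $B$ are themselves large. If so, I would first try flipping a pair of adjacent interior vertices simultaneously, or a maximal alternating structure, rather than a single vertex. As a fallback, I would use a potential that counts only components of size larger than $6$.
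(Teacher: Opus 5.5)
Your reduction is the same as the paper's. You contract a perfect matching $M$ to get a $4$-regular multigraph $H$, and a $2$-coloring of $H$ whose monochromatic components have at most $6$ vertices lifts to a coloring of $G$ whose components have between $2$ and $12$ vertices. The genuine gap is the remaining step, which you do not prove: that $H$, a graph of maximum degree at most $4$, has a $2$-coloring with all monochromatic components of order at most $6$. This is not a routine max-cut exercise. It is the theorem of Haxell, Szab\'o and Tardos (Theorem~\ref{hst4}), and the paper's whole proof of Theorem~\ref{thm:pm} consists of applying it to the contracted graph. For a multigraph, apply it to the underlying simple graph, since parallel edges do not change monochromatic components.

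Your sketch does not establish this bound. After fixing a maximum cut, flipping an interior vertex $v$ of a long monochromatic path keeps the cut optimal. However, it creates a component of order $1+|A|+|B|$, and nothing in your argument controls $|A|$ and $|B|$. For example, splitting a $7$-vertex path at its middle vertex when $|A|=|B|=5$ changes the contribution of these components to $\sum_K|K|^2$ from $49+25+25=99$ to $9+9+121=139$. So the proposed potential can increase under exactly the flip you intend. Minimality then gives no contradiction unless you can show that some interior vertex has small neighbouring components, and that claim is the unproved heart of the matter. The fallback ideas (pair flips, alternating structures, a potential counting only components larger than $6$) are not carried out either. The proof is therefore incomplete at its only nontrivial step. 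To complete it, replace your last two paragraphs by an appeal to the Haxell--Szab\'o--Tardos theorem, or else supply a genuinely new proof of the bound $6$ for graphs of maximum degree $4$.
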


If $G$ does not have a perfect matching, then we can use the famous Edmonds-Gallai theorem to obtain the following result.

\begin{theorem} \label{t:bal_EG}
If $G$ is a cubic graph, then there is a red--blue vertex coloring of $G$ such that every monochromatic component has at least $2$ and at most $13$ vertices.
\end{theorem}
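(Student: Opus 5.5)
We reduce to Theorem~\ref{thm:pm}, or rather to its method, using the Edmonds--Gallai decomposition. Let $D$ be the set of vertices missed by some maximum matching, $A=N(D)\setminus D$, and $C=V(G)\setminus(A\cup D)$. The Edmonds--Gallai theorem says that every component of $G[D]$ is factor-critical and $G[C]$ has a perfect matching. It also says that every maximum matching matches $A$ into distinct components of $G[D]$ and is near-perfect on each component of $G[D]$. Fix such a maximum matching $M$. Then $M$ covers all vertices except exactly one vertex $u_K$ in each component $K$ of $G[D]$ that receives no $M$-edge from $A$.

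We then imitate the proof of Theorem~\ref{thm:pm}, but with respect to $M$ instead of a perfect matching. Contract each edge of $M$ to a single vertex, coloring both endpoints alike, so every monochromatic component automatically has at least $2$ vertices. The only exceptions are the $M$-unmatched vertices $u_K$. Assuming the proof of Theorem~\ref{thm:pm} runs on the contracted graph $G/M$ (maximum degree $4$) and produces components of size at most $12$, the extra vertices $u_K$ are the only new difficulty.

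For each unmatched $u_K$, we treat $u_K$ together with one $M$-edge $e$ in $K$ adjacent to it. Such an edge exists because $K$ is factor-critical and therefore contains a neighbour of $u_K$ whenever $|K|\ge 3$. If $|K|=1$, all three neighbours of $u_K$ lie in $A$ and are matched elsewhere. We glue $u_K$ onto $e$, forming a matched triple, a path $P_3$, which receives a single color in the contracted structure. This works provided no two unmatched vertices claim the same edge, and since they lie in different components of $G[D]$ this is automatic when $|K|\ge3$. If $K=\{u_K\}$ is a singleton, we instead attach $u_K$ to one of its $A$-neighbours' $M$-edges. Here we need to make sure each such $M$-edge is chosen by at most a bounded number of singletons. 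Using the structure of $M$ (each $a\in A$ is matched into a distinct $D$-component), we attempt a Hall-type assignment of singletons to $A$-neighbours with load at most one. Failing that, we recolor locally and give the singleton the color opposite to all its neighbours' blobs. That is impossible, since a singleton color class is forbidden, so we really do need the assignment.

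The main obstacle is bounding the component size: merging a vertex into a blob can enlarge a monochromatic component by one vertex. The constant increases from $12$ to $13$ exactly if each monochromatic component absorbs at most one extra vertex. We will try to arrange this by having the coloring procedure of Theorem~\ref{thm:pm} treat the enlarged triple as a single unit. We then check that the components produced there contain at most one enlarged unit, perhaps by choosing which edge $e$ to attach to, or by using the slack in the degree count, since a triple has fewer outside edges than two matched edges. Verifying this accounting, and handling the Hall assignment for singleton components of $G[D]$, is where the real work lies.
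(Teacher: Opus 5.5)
Your proposal has a genuine gap, and it is exactly the step you defer: nothing in your set-up yields the bound $13$. As stated, the construction does not even let you apply Theorem~\ref{hst4}.

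Gluing $u_K$ onto an adjacent $M$-edge $zt$ gives a unit with total degree $9$ and only two internal edges (unless $u_Kt\in E(G)$), so after contraction it can have degree $5$. The same holds in your singleton case, where the triple $u_K$, $a$, $d$ has two internal edges and up to five outside edges. Your ``slack'' remark compares a triple with two matched edges, but the relevant comparison is with one matched edge, which has at most four outside edges. So the degree-$4$ hypothesis of Theorem~\ref{hst4} fails.

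Moreover, even on a graph of maximum degree $4$, Theorem~\ref{hst4} is a black box. A monochromatic component with six contracted vertices may contain several triples, so the count only gives $18$, and you cannot make the coloring ``treat a triple as a unit'' in a way that limits how many triples share a component. With an arbitrary maximum matching $M$, unmatched components (in particular singletons, which send three edges into $A$) can be linked through $A$ to other triples, so local choices of the edge $e$ do not help.

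The paper closes this gap with two ideas you are missing.

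First, it does not take an arbitrary maximum matching. Consider the bipartite graph between your $A$ and the components of $G[D]$; matchings saturating $A$ exist by Theorem~\ref{e-g}\emph{(iii)}. Among these, take one that saturates as many components with at least three edges to $A$ as possible. An alternating-search count then shows that every unsaturated component $O$ sends \emph{exactly one} edge $wb$ to $A$: the reached components all have at least three edges to $A$, all going into the reached set $T\subseteq A$, yet there are $|T|+1$ of them. In particular, singleton components are always saturated, so your Hall-type assignment is unnecessary.

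Second, it deletes the bridge $wb$ before contracting. Form the triple from $w$, a neighbor $z$ of $w$ in $O$, and $z$'s matching partner $u$ in $O-w$. This triple then has degree at most $4$, and $O$ becomes a connected component of its own containing exactly one triple, which gives $2\cdot 6+1=13$. Finally, swapping the two colors inside each such $O$ where necessary makes $w$ and $b$ differently colored, so restoring $wb$ merges no monochromatic components.
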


It is known that there are some subcubic graphs for which a $3$-crumby coloring does not exist. In principle, it might happen that a $4$-crumby coloring always exists for cubic graphs. However, the existence of an $\ell$-crumby coloring for any $\ell$ is not known. Our main result proves this for any subcubic graph and $\ell=88$.

\begin{theorem}\label{thm:main88}
Every subcubic graph $G$ has an $88$-crumby coloring. In other words, there exists a red--blue coloring such that
$  \Delta(G[\B])\le 1,
  \delta(G[\R])\ge 1,
$
and every path in $G[\R]$ has at most $88$ edges, where $R$ is the set of red and $B$ is the set of blue vertices.
\end{theorem}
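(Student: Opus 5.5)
We will follow the Berke--Szabó strategy of removing an independent set, and then repair the singletons that this creates. The starting point is that every subcubic graph has a $(1,22)$-relaxed coloring, i.e.\ an independent set $I$ such that every component of $G-I$ has at most $22$ vertices. Put $\B_0=I$ and $\R_0=V(G)\setminus I$. Then $G[\B_0]$ has maximum degree $0$, and every red component has at most $22$ vertices, so every red path has at most $21$ edges. The only defect is that red singletons may exist. Since $I$ is independent, a red singleton $v$ has all its neighbours in $\B_0$, and each such neighbour is an isolated blue vertex.

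The repair step is to recolor blue vertices to red so that each red singleton gets a red neighbour, while keeping the blue condition $\Delta(G[\B])\le 1$. Recoloring blue vertices red only shrinks the blue graph, so the blue condition is automatic. Every red singleton $v$ (of degree at least $1$; isolated vertices of $G$ are simply colored blue) has at least one blue neighbour $b$, and we recolor one such chosen neighbour $b=f(v)$ red. Doing this merges $v$, $b$ and possibly other red components adjacent to $b$ into one red component. Each merge joins at most $3$ components through one vertex of degree at most $3$. The danger is that the merges chain together into long red paths, since a vertex $b$ may connect a component $C$ to another component $C'$, which connects onward through another recolored vertex, and so on.

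To control this, we first choose $I$ more carefully and then choose the $f(v)$ greedily. Concretely, we take $I$ to be a maximal independent set with the $(1,22)$ property that also minimizes the number of red singletons. After that, we want every recolored vertex $b$ to be adjacent to at most one red component other than singletons, or else to be blue-friendly. The alternative to recoloring $b$ red is to recolor $v$ blue and match it to one blue neighbour, which is allowed because that neighbour had blue degree $0$. That is, a singleton $v$ with a blue neighbour $b$ becomes a blue edge $vb$, provided $b$ has no other blue neighbour. Since $I$ was independent, this operation is always available unless $b$ has already been used for another matching. So we process the singletons and turn as many as possible into blue matching edges; this is a bipartite matching problem between singletons and $\B_0$. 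Only the unmatched singletons are fixed by recoloring a blue neighbour red.

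By Hall-type counting in this bipartite graph of maximum degree $3$, every unmatched singleton $v$ has all its neighbours matched to other singletons. We then recolor the whole blue edge $b v'$ (with $v'$ a singleton) back so that $v,b,v'$ become red. This produces a red component built from bounded pieces of size at most $22$, glued along an auxiliary structure. The key claim is that the auxiliary graph of pieces and gluing vertices, chosen via an alternating-path argument, contains no long paths: every gluing vertex has degree at most $3$, and we can arrange the choice so that each original red component of size at least $2$ is glued at most once. Then every red path passes through at most $4$ pieces, giving a length of at most $4\cdot 22$ edges, which is $88$.

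The main obstacle is exactly this last step: showing that the choices can be made so that merges do not cascade, i.e.\ that each non-singleton red component absorbs at most a bounded number of gluing vertices lying along a single path. We would first try to prove this by a minimal counterexample or potential argument on the choice of $I$ and the matching, re-routing along alternating paths whenever two gluings hit the same component.
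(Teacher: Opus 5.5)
Your proposal has a genuine gap, and it sits exactly where you place the ``main obstacle''. The claim that the repairs can be chosen so that merges do not cascade is never proved, and it is the heart of the theorem.

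Worse, the mechanism you propose for limiting the merges is not available. A red singleton $v$ has \emph{all} its neighbours in the independent set $I$. So recoloring $v$ blue makes $v$ adjacent in $G[B]$ to all $d_G(v)$ of them, not only to the one you matched it with. For $d_G(v)\ge 2$ this violates $\Delta(G[B])\le 1$.

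Hence the bipartite ``blue matching'' step only handles singletons of degree one. Every other singleton must be fixed by recoloring some $b\in I$ red. Since $I$ is independent, every neighbour of $b$ is red, so each such repair fuses $v$ with up to two non-singleton components of order up to $22$. Nothing prevents chains $C_1-b_1-C_2-b_2-C_3-\cdots$ from forming. Choosing $I$ to minimise the number of singletons gives you no structure that controls this. Each singleton has at most three candidate repairs, which is too little freedom for a selection argument to spread the gluing vertices apart.

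Even granting ``at most $4$ pieces'', the count is off. Four pieces of at most $22$ vertices joined through three gluing vertices allow paths with $4\cdot 21+3\cdot 2=90$ edges, not $88$.

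The paper avoids creating singletons at all. It starts from a locally feasible coloring that maximises $|E(R,B)|$ and then $|R|$. Extremality forces the red core to consist of paths and cycles, with a private red leaf at every internal vertex. It also gives each internal core vertex a cut-neutral switch. The switch vertex's other neighbours lie only in terminal regions near core-path ends, and each terminal region meets at most eight blue vertices.

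The needed freedom comes from grouping $28$ consecutive internal core vertices into a block. Each block then offers at least $28$ distinct switch candidates. Meanwhile the conflict graph (candidates touching a common terminal region) has maximum degree $14$. Haxell's independent transversal theorem therefore selects one switch per block with no terminal region touched twice. This guarantees that every new red component contains at most one switch vertex, so merges cannot cascade. Your approach has no counterpart of this many-candidates-per-constraint step.
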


The paper is organised as follows. In Section 2, we focus on the balanced version, where the existence of  monochromatic singletons are forbidden. In Section 3, we prove our main result about the existence of an $88$-crumby coloring of any subcubic graph. 
We finish our paper with some open problems.


\section{Small components without singletons}

There is one well-known key ingredient, the Edmonds-Gallai decomposition, that we use in our proof just like Berke and Szabó did in \cite{bsz}. 
A component $C$ of a graph is {\it hypomatchable} if removing any vertex $v$ of $C$ has a perfect matching. 

\begin{theorem}[Edmonds-Gallai decomposition \cite{edm,gal}]\label{e-g}
Let $G$ be a graph and let $A\subseteq V(G)$ be the collection of all vertices $v$ such that there exists a maximum size matching which does not cover $v$.
Set $B=N(A)$ and $C=V(G)\setminus (A \cup B)$. Now
\begin{enumerate}
    \item[$(i)$] Every odd component $O$ of $G-B$ is hypomatchable and $V(O)\subseteq A$.
    \item[$(ii)$] Every even component $Q$ of $G-B$ has a perfect matching and $V(Q)\subseteq C$. 
    \item[$(iii)$] For every $X\subseteq B$, the set $N(X)$ contains vertices in more than $|X|$ odd components of $G-B$.
\end{enumerate}
\end{theorem}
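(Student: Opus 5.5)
The plan is the classical route through the Tutte--Berge formula $\mathrm{def}(G)=\max_{S}\bigl(\mathrm{odd}(G-S)-|S|\bigr)$ together with two auxiliary facts. The first is \emph{Gallai's lemma}: a connected graph in which every vertex is missed by some maximum matching is factor-critical. The second is a \emph{stability lemma}: for $b\in B$ we have $\nu(G-b)=\nu(G)-1$, and the sets $A,B,C$ of $G-b$ are $A$, $B\setminus\{b\}$ and $C$.

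\textbf{Stability lemma.} Let $b\in B$ have a neighbour $a\in A$, and let $M$ be a maximum matching missing $a$. Then $M$ covers $b$, since otherwise $M+ab$ would be larger. So $M-b$ is a matching of $G-b$ of size $\nu(G)-1$. Suppose $G-b$ had a matching $M'$ of size $\nu(G)$. Then $M'$ is maximum in $G$ and misses $b$. Every maximum matching of $G$ missing $a$ covers $b$, so $M'$ covers $a$. Now form $M\triangle M'$ and follow the alternating path starting at $a$. Switching along it produces a maximum matching missing both $a$ and $b$, which contradicts $ab\in E$. Hence $\nu(G-b)=\nu(G)-1$.

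From this, a vertex $v\ne b$ is missed by a maximum matching of $G-b$ if and only if it is missed by a maximum matching of $G$. One direction is $M-b$ as above; the other comes from the same alternating-path exchange. So $A$, $C$ and $B\setminus\{b\}$ are preserved.

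\textbf{Reducing to $B=\emptyset$ and proving (i), (ii).} Applying the stability lemma repeatedly deletes all of $B$. This gives $\nu(G-B)=\nu(G)-|B|$, with the same $A$ and $C$ and an empty $B$. Since $B=N(A)$, no edge joins $A$ to $C$ in $G-B$, so every component of $G-B$ lies entirely in $A$ or entirely in $C$, and $\nu(G-B)$ is the sum of the matching numbers of the components.

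For a component $Q$ inside $C$: every vertex is covered by every maximum matching, and maximum matchings of $G-B$ restrict to maximum matchings of each component. Hence $Q$ has a perfect matching, which gives (ii) and shows $Q$ is even.

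For a component $O$ inside $A$: every vertex is missed by some maximum matching of $O$. Gallai's lemma then makes $O$ factor-critical, hence odd and hypomatchable, which gives (i).

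\textbf{Gallai's lemma (main obstacle).} I would prove it by contradiction. Suppose every maximum matching misses at least two vertices. Choose a maximum matching $M$ and two missed vertices $u,v$ minimizing $d(u,v)$. We have $d(u,v)\ge 2$, since otherwise $M+uv$ is larger. Let $w$ be an internal vertex of a shortest $u$--$v$ path. By minimality $w$ is covered by $M$.

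Take a maximum matching $N$ missing $w$, chosen to maximize $|M\cap N|$. By minimality of $d(u,v)$, $N$ covers both $u$ and $v$. The component of $M\triangle N$ at $u$ is an alternating path, and switching $N$ along it yields a maximum matching with larger $|M\cap N|$ that still misses $w$, unless that path ends at $w$. The same holds for $v$. Both paths cannot end at $w$, since $w$ lies on only one path of $M\triangle N$. This is a contradiction. The delicate point is checking these exchange cases carefully.

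\textbf{(iii).} Every maximum matching $M$ of $G$ has $|M|=\nu(G-B)+|B|$. So $M$ matches every $b\in B$ into a distinct odd component of $G-B$, is perfect on $C$, and is near-perfect on each odd component.

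Suppose some $X\subseteq B$ satisfies: $N(X)$ meets at most $|X|$ odd components. Since $X$ is matched injectively into those components, they number exactly $|X|$, and each receives an $M$-edge from $X$. Such a component is then perfectly covered by $M$ together with its $B$-edge. This holds for every maximum matching $M$, so no vertex of these components is missed by any maximum matching. That contradicts their vertices lying in $A$, which proves (iii).
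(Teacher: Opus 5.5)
The paper gives no proof of this theorem; it only cites Edmonds and Gallai. Your outline is the standard textbook route (Gallai's lemma, the stability lemma, deletion of $B$). Your proof of Gallai's lemma, the reduction to $G-B$, (i), (ii) and the counting argument for (iii) are sound.

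The step that fails as written is the exchange in the stability lemma. Both $a$ and $b$ have degree one in $M\triangle M'$, so the alternating path starting at $a$ may end at $b$. Switching along it then yields a maximum matching missing exactly one of $a,b$, never both. In the triangle $abc$ with $M=\{bc\}$, $M'=\{ac\}$ this is exactly what happens. There $a$ and $b$ are adjacent and both are missed by maximum matchings, so no argument that uses only the edge $ab$ can exclude $b\in A$. The repair is that $B=N(A)$ must be read as $N(A)\setminus A$; otherwise (iii) already fails for the triangle. With that reading, $M'$ is a maximum matching of $G$ missing $b$, which directly contradicts $b\notin A$. So $\nu(G-b)=\nu(G)-1$ is immediate, and the exchange argument should simply be dropped.

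The converse inclusion $A(G-b)\subseteq A(G)$ is justified only by ``the same alternating-path exchange'', so it must be written out with the analogous end-vertex case handled. Let $N'$ be a maximum matching of $G-b$ missing $v\neq b$, and let $M$ be a maximum matching of $G$ missing $a$.
\begin{itemize}
\item If $v=a$, there is nothing to prove.
\item If $N'$ misses $a$, then $N'+ab$ works.
\item Otherwise, the component $P$ of $N'\triangle(M-b)$ at $a$ is an even path ending at some $N'$-exposed vertex $z$. If $z\neq v$, then $(N'\triangle P)+ab$ is a maximum matching of $G$ missing $v$. If $z=v$, then $M\triangle P$ is one; note that $b$'s $M$-partner is not on $P$.
\end{itemize}

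Finally, your contradiction in (iii) needs at least one component, i.e.\ $X\neq\emptyset$. This is a defect of the statement as printed: for $X=\emptyset$ it is false, since $N(\emptyset)$ meets no odd component. You should state the restriction explicitly.
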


We also need another nice result of Haxell, Szabó and Tardos~\cite{hszt} which we apply in the next section. We would like to emphasize that in the next result, singleton components are permitted. 

\begin{theorem}[Haxell, Szab\'o, Tardos \cite{hszt}]\label{hst4}
 If $G$ is a graph with maximum degree at most $4$, then there is a $2$-coloring of the vertices such that every monochromatic component
 has at most $6$ vertices.
\end{theorem}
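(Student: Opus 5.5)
This result is quoted from \cite{hszt}, and the paper only uses it as a black box. If I were to reprove it, I would follow the standard local-optimization strategy in two stages.

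\textbf{Stage 1: reduce to paths and cycles.} Take a red--blue coloring of $G$ that minimizes the number of monochromatic edges; this is Lovász's argument. Suppose some vertex $v$ had at least $3$ neighbours of its own colour. Since $\deg(v)\le 4$, it would then have at most $1$ neighbour of the other colour, so recolouring $v$ would strictly decrease the number of monochromatic edges. Hence in an optimal colouring every vertex has at most $2$ neighbours of its own colour. Each monochromatic component therefore induces a graph of maximum degree at most $2$, i.e.\ a path or a cycle, though possibly a long one.

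\textbf{Stage 2: choose a good optimal colouring via a potential.} Among all colourings with the minimum number of monochromatic edges, I would pick one minimizing a secondary potential $\Phi=\sum_K \varphi(|K|)$ over monochromatic components $K$. Here $\varphi$ is a rapidly (say exponentially) increasing function, or alternatively one can minimize the vector of component sizes lexicographically from the largest down. The useful move is the following. Take a vertex $v$ with exactly $2$ neighbours of its own colour and $2$ of the other colour, and recolour it. This leaves the number of monochromatic edges unchanged, so the new colouring stays in the optimal class provided every vertex still has at most $2$ same-coloured neighbours.

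Now suppose a monochromatic component $K$ has at least $7$ vertices. Because $|K|\ge 7$, $K$ contains many internal vertices $v$ of degree $2$ in $K$; by optimality each such $v$ has exactly $2$ neighbours $u,w$ of the opposite colour. Recolouring $v$ splits $K$ into two strictly smaller pieces, or opens $K$ up into a path if $K$ is a cycle. At the same time, $v$ merges with the components containing $u$ and $w$.

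The aim is to find such a $v$ for which the new component containing $v$ has size less than $|K|$. It must also satisfy the degree condition: $u$ and $w$ must have had at most $1$ same-coloured neighbour before the move. If such a $v$ exists, then with a steep enough $\varphi$ the potential $\Phi$ strictly decreases, contradicting minimality.

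\textbf{Main obstacle.} The difficulty is entirely in this last step. One must show that, among the $\ge 5$ internal vertices of a long path or cycle $K$, at least one has a good switch. I would argue by contradiction. Assume every internal vertex of $K$ is blocked, either because an opposite neighbour is already saturated (an internal vertex of its own monochromatic path) or because the merged component would be large. Then double count the edges between $K$ and the opposite colour class. Each blocked vertex forces its opposite neighbours to lie in long opposite-coloured components, and those components in turn must be blocked. This forces a global structure contradicting the choice of the component of maximal size (take $K$ largest).

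If the plain potential argument does not close, I would refine the move to a pair of simultaneous switches along an edge. I expect the specific constant $6$ to come precisely from this case analysis: the threshold at which a component of size $7$ always has a switchable internal vertex whose merge creates a component of size at most $6$.
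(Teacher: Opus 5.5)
The paper gives no proof of this theorem. It is quoted from Haxell, Szab\'o and Tardos and used as a black box, so your outline has to stand on its own, and its decisive step is missing.

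Stage~1 is correct, and it gives more than you use. Suppose $v$ has two neighbours of its own colour. Minimality then forces $d_G(v)=4$ with exactly two opposite neighbours $u,w$. Recolouring $v$ produces another colouring with the minimum number of monochromatic edges, and Lov\'asz's argument applied to that colouring shows that $u$ and $w$ had at most one neighbour of their own colour. So the ``saturation'' obstruction you list never occurs. Everything hinges on the size $|C_u|+|C_w|+1$ (or $|C_u|+1$ if $C_u=C_w$) of the merged component.

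That is exactly the step you leave open. Your claim that some internal vertex of a largest component $K$ with $|K|\ge 7$ has a $\Phi$-decreasing switch rests only on an unspecified double count, an unspecified fallback to pair switches, and the expectation that $6$ will come out of a case analysis. This step is the whole content of the theorem.

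Nor can the step be closed by counting locally blocked vertices. Blocking only needs $|C_u|+|C_w|\ge |K|-1$, so the opposite components need not be long. The blocked structure is also stable under all your moves. Take three red paths on $7$ vertices and six blue paths on $3$ vertices, and add red--blue edges as follows. Every internal vertex of a red path is adjacent to ends of two distinct blue paths. Every blue end has three red neighbours. Every blue middle vertex is adjacent to two red ends. Every red end is adjacent to one blue end and two blue middle vertices. The degree counts match ($15\cdot 2+6=36=12\cdot 3$ edges at blue ends, and $12$ edges between blue middles and red ends), and a simple $4$-regular graph of this kind is easy to write down.

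In this colouring every vertex has at most two neighbours of its own colour, and every single recolouring is either neutral or increases the number of monochromatic edges. No neutral recolouring decreases $\Phi$. Recolouring the vertex $v_i$ ($2\le i\le 6$) of a red path creates a blue path on $3+1+3=7$ vertices. It also splits the red path into pieces of sizes $i-1$ and $7-i$, so $\Phi$ changes by $\varphi(i-1)+\varphi(7-i)-2\varphi(3)\ge 0$ for convex $\varphi$; the lexicographic vector does not decrease either. Recolouring a blue middle vertex merges red $7$-paths into a larger component.

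Hence any contradiction must come from global minimality or from genuinely non-local moves, and the proposal provides neither. As written, it only proves that both colour classes induce paths and cycles.
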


Inspired by the Haxell, Szabó and Tardos result, we would like to focus on two-colorings such that all monochromatic components are quite small, but none of them are singletons. We start with some observations in which the monochromatic components are measured by their radii instead of their size. The main idea is to use large matchings and contract the matching pairs which will allow us to use Theorem~\ref{hst4} and after splitting the matching pairs there should not be any singletons because of the matching pairs.



\begin{lemma}\label{3con}
 If $G$ is a cubic $2$-edge-connected graph, then there is a red-blue coloring of the vertices of $G$ such that every monochromatic component
 has radius at least $1$ and at most $6$.
\end{lemma}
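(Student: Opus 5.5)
By Petersen's theorem, every $2$-edge-connected cubic graph has a perfect matching, so we fix a perfect matching $M$ of $G$. We contract every edge $uv\in M$ to a single vertex $x_{uv}$. Each endpoint of a matching edge has two further neighbours, so each contracted vertex has degree at most $4$ in the contracted multigraph $G/M$. Loops cannot arise, because $M$ is a matching. Parallel edges can arise, for example when two matching edges are joined by two edges. We delete the duplicate edges, which changes neither the connectivity of colour classes nor the degree bound, and obtain a simple graph $H$ with $\Delta(H)\le 4$.

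By Theorem~\ref{hst4}, $H$ has a red--blue colouring in which every monochromatic component has at most $6$ vertices. We then lift this colouring to $G$ by giving both endpoints $u,v$ of each matching edge the colour of $x_{uv}$. Every monochromatic component of $G$ is therefore a union of whole matching edges, and it contains the edge $uv$ itself. Hence it has at least $2$ vertices, so its radius is at least $1$; this is how the matching pairs rule out singletons.

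For the upper bound, let $K$ be a monochromatic component of $G$. Its image in $H$ is a connected monochromatic subgraph $K'$, since an edge between two matching pairs in $G$ becomes an edge of $H$. Conversely $K'$ lies inside one monochromatic component of $H$, so $|V(K')|\le 6$ and $K$ consists of at most $6$ matching edges, that is, at most $12$ vertices.

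To bound the radius, we fix a vertex $x_0$ of $K'$. The component $K'$ is connected on at most $6$ vertices, so every vertex of $K'$ is within distance $5$ of $x_0$ in $K'$. We lift a shortest path in $K'$ from $x_0$ to any $x_{uv}$. Each step between consecutive pairs costs one edge of $G$, and may additionally require traversing the matching edge inside an intermediate pair to reach the endpoint carrying the connecting edge. This crude count gives distance roughly $2\cdot 5+1$, so we must argue more carefully.

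The cleaner route is to use the vertex count directly. $K$ is a connected graph on at most $12$ vertices, and a tree or graph on $n$ vertices has radius at most $\lfloor n/2\rfloor$, taking the centre of a spanning tree. This gives radius at most $6$, as required. The only subtlety is that this bound on the radius is the standard fact that a connected graph on $n$ vertices has radius at most $\lfloor n/2\rfloor$, obtained from the centre of a longest path in a spanning tree. I expect no real obstacle beyond correctly handling the multigraph issue in the contraction.
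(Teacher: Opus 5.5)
Your proof is correct and follows the paper's argument: Petersen's theorem, contraction of the perfect matching, the Haxell--Szab\'o--Tardos theorem, then lifting so that every monochromatic component contains a matching edge and has at most $12$ vertices. You also make explicit two steps the paper leaves implicit, namely deleting parallel edges after contraction and deducing radius at most $6$ from the fact that a connected graph on at most $12$ vertices has radius at most $\lfloor 12/2\rfloor$; the abandoned ``crude count'' paragraph can simply be dropped.
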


\begin{proof}
 Petersen's theorem implies that $G$ has a perfect matching $M$.
 Let us contract all edges in $M$ such that a $4$-regular graph $H$ arises.
 Now $H$ has a red-blue coloring with all components having at most $6$ vertices by Theorem~\ref{hst4}.
 We open up the vertices now to get back $G$.
 In that way a monochromatic component of $H$ with at most $6$ vertices can grow to the double.
 However, it will have radius at most $6$. 
\end{proof}



Theorem~\ref{thm:pm} immediately follows from Lemma~\ref{3con}. Next we show that $12$ cannot be decreased below $7$ in Theorem~\ref{thm:pm}.

\begin{proposition}\label{prop:hea}
The Heawood graph has a perfect matching, and every red--blue coloring of its vertices with no monochromatic singleton contains a monochromatic component on at least $7$ vertices. Moreover, this bound is sharp for the Heawood graph.
\end{proposition}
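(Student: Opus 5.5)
The perfect matching is the easy part. The Heawood graph is the point--line incidence graph of the Fano plane, so it is cubic and bipartite. By Hall's theorem (equivalently K\H{o}nig's theorem) every regular bipartite graph has a perfect matching. Lemma~\ref{3con} would also give one, since the graph is $3$-connected.

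For sharpness I would exhibit an explicit coloring. I will use the LCF description $[5,-5]^7$: vertices $0,\dots,13$, with $i\sim i\pm1$, and $i\sim i+5$ for even $i$. I will search for a red set $R$ with $|R|=7$ such that $G[R]$ and $G[V\setminus R]$ split into pieces of sizes $2$ to $7$, at least one piece having exactly $7$ vertices. A natural first candidate is $R=N[v]\cup\{x,y,z\}$ with $x,y,z$ at distance $2$ from $v$, chosen so that the complement splits into a $7$-vertex tree or a few pairs and paths. This verification is a finite check, done by hand with the labelling.

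The lower bound is the core of the proof. Suppose, for contradiction, that some coloring has every monochromatic component of order between $2$ and $6$. I would use four structural facts.
\begin{enumerate}
\item The graph has girth $6$, so each monochromatic component is either a tree on $2$ to $6$ vertices or an induced $6$-cycle.
\item Any two vertices on the same side of the bipartition have exactly one common neighbour, because two points lie on exactly one line and two lines meet in exactly one point.
\item The graph is distance-transitive with automorphism group of order $336$, which cuts down the number of cases.
\item A tree component on $s$ vertices sends out $s+2$ edges, and a $6$-cycle component sends out $6$.
\end{enumerate}

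I would run the following counting argument. Let $c_R,c_B$ be the numbers of components and $h$ the number of hexagon components. The $21$ edges split into monochromatic and bichromatic ones, which gives $|\partial R| = 21-e(R)-e(B) \ge 7 + c_R + c_B - h$. Since every component has at most $6$ vertices, $c_R+c_B\ge 3$. I do not expect counting alone to close the argument, so I would combine it with the common-neighbour property. Take the colour class of size at least $7$, say blue. It must be disconnected into at least two components of size at most $6$. Any two blue vertices on the same side of the bipartition lying in different blue components have their unique common neighbour red. I would use this to force many red vertices whose neighbourhoods meet several blue components. I would then derive a contradiction either with $|R|\le 7$ or with red components being non-singletons of size at most $6$, by splitting into cases on the multiset of blue component sizes and on how the blue vertices are distributed between points and lines.

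The main obstacle is this case analysis. I would first try to reduce it with symmetry and the projective-plane structure: a $6$-cycle component is a triangle of the Fano plane, and a star $K_{1,3}$ is a point together with its three lines. If the cases remain too numerous, I would fall back on an exhaustive check of the $2^{14}$ colorings, with symmetry reduction by the automorphism group, possibly by computer. That check is straightforward to carry out and would certify that no coloring with all components of size $2$ to $6$ exists.
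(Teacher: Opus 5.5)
Your proposal does not prove the lower bound, which is the real content of the proposition. You list structural facts and then defer to a case analysis you do not carry out, or to an enumeration of the $2^{14}$ colorings that you do not run.

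You are also missing the reduction that makes a hand analysis feasible. Your relation for $|E(R,B)|$ only bounds the cut from below. The matching upper bound comes from the no-singleton condition: every vertex has a neighbor of its own color, hence at most two neighbors of the other color, so $|E(R,B)|\le 2\min(|R|,|B|)$. Let $r$ be the size of the larger class, say red. By your fact 4, red sends exactly $r+2(c_R-h_R)$ edges across. Suppose $r\ge 9$. If some red component is a tree, this is at least $r+2\ge 11$. If all red components are hexagons, then $r=12$ and the count is $12$. In either case it exceeds $2(14-r)\le 10$. Hence $r\in\{7,8\}$.

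From here counting cannot finish, as you suspected. For example, $r=8$ with red two trees and blue three disjoint edges is consistent with every edge count. So is red a hexagon plus an edge with blue two trees. A geometric argument is therefore needed for $r\in\{7,8\}$. The paper proceeds as follows:
\begin{enumerate}
\item Let $p$ be the number of red points. By point--line duality, assume $p$ is at most the number of red lines, so $p\le 3$ if $r=7$ and $p\le 4$ if $r=8$.
\item The no-singleton condition says every red line contains a red point and every red point lies on a red line, and likewise for blue.
\item Counting the lines through the red points then forces $p\ge 2$ (respectively $p\ge 3$).
\item The remaining cases are $p=2$; $p=3$ with the red points collinear or not; and $p=4$ with the blue points collinear or not. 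In each, either all $r\ge 7$ red vertices are connected, or some vertex is isolated in its color.
\end{enumerate}
Your intended split by the point/line distribution is in this spirit, but it has to be executed.

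The other two parts are essentially fine. The perfect matching follows from the graph being regular and bipartite. The lemma you cite is a coloring statement; it is Petersen's theorem, used in its proof, that gives a matching.

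Your sharpness candidate works, but you should fix the choice and verify it. Take $v$ a point. Then $N[v]\cup\{x,y,z\}$ is always a red $7$-vertex tree. The only possible singleton is a blue line avoiding $v$ whose three points are all red, and this happens exactly when $x,y,z$ are collinear. For non-collinear $x,y,z$, both classes have $7$ vertices and neither has a singleton.
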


\begin{proof}
Let $H$ be the Heawood graph, regarded as the incidence graph of the Fano plane. Thus one bipartition class consists of the seven points and the other of the seven lines. In particular, $H$ is cubic and bipartite, and hence it has a perfect matching.

Suppose that $H$ has a red--blue coloring in which every monochromatic component has order between $2$ and $6$. Let $r$ be the number of red vertices, and, by interchanging the colors if necessary, assume that $r\ge7$.

We first show that $r\le8$. Since $H$ has girth $6$, every connected subgraph on at most $6$ vertices is either a tree or a $6$-cycle. Hence a monochromatic component on $s\le6$ vertices has at least $s+2$ edges leaving it, unless it is a $6$-cycle, in which case it has $6$ edges leaving it. If $r\ge9$, the red vertices form at least two components, and it follows that at least $11$ red--blue edges leave the red vertices. On the other hand, every blue vertex has a blue neighbor, and therefore is incident with at most two red--blue edges. Hence $|E(R,B)|\le 2(14-r)\le10$ provides a contradiction. Thus $r\in\{7,8\}$.

Let $p$ denote the number of red points. By the point--line duality of the Fano plane, we may assume that the number of red points is at most the number of red lines. Hence $p\le3$ if $r=7$, and $p\le4$ if $r=8$. Notice also that every red line contains a red point and every red point lies on a red line, since monochromatic singletons are forbidden. The analogous statement holds for blue.

Suppose first that $r=7$. We have $p\ge2$. If $p=2$, then there are five red lines. The five lines incident with at least one of the two red points are therefore precisely the red lines. In particular, the line through the two red points is red, so all seven red vertices belong to one component, a contradiction.

Thus $p=3$. If the three red points are collinear, their common line must be red, since otherwise it would be an isolated blue vertex. The common red line connects the three red points, and every other red line is incident with one of them. Hence again all seven red vertices form one component.

Suppose therefore that the three red points are non-collinear. There are three lines containing two red points, three lines containing exactly one red point, and one line containing no red point. The latter line must be blue. If at least two of the three lines containing two red points were red, then the three red points, and hence all red vertices, would be connected. Consequently exactly one of these three lines is red, and all three lines containing exactly one red point are red. These three lines are concurrent at the unique point $x$ which is outside of the three lines determined by a pair of red points. The point $x$ is blue, while all three lines through it are red, so $x$ is an isolated blue vertex, a contradiction.

It remains to consider $r=8$. Here $p\ge3$, since two red points are incident with only five lines, whereas there are at least six red lines if $p\le2$.

Suppose first that $p=3$. If the three red points are collinear, their common line must be red and all eight red vertices are connected, as above. Thus the three red points are non-collinear. The unique line disjoint from them is blue. Since there are five red lines, there is only one further blue line. The fourth blue point must have a blue neighbor, so this second blue line is one of the three lines containing exactly one red point. Consequently all three lines containing two red points are red. These three lines connect the three red points, and every remaining red line is incident with one of them. Thus all eight red vertices are connected, a contradiction.

Finally let $p=4$. There are four red points and four red lines, and three blue points.

If the three blue points are collinear on a line $L$, then $L$ must be blue. Every other line contains exactly two red points. Thus the four red lines correspond to four edges of the complete graph $K_4$ on the four red points. Since no red point is isolated, this four-edge graph has minimum degree at least one, and hence it is connected. Therefore all eight red vertices are connected, a contradiction.

Suppose instead that the three blue points are non-collinear. There is a unique line $L$ disjoint from them; it contains three red points and must be red. Let $x$ be the fourth red point. The three lines through $x$ each contain one of the three red points of $L$ and one blue point. If none of these three lines is red, then $x$ is an isolated red vertex. If at least one is red, then $x$ is joined to the red component containing $L$. Hence all four red points lie in one red component, and every red line is incident with this component. Thus all eight red vertices are connected. This is again a contradiction.

We conclude that every red--blue coloring of the Heawood graph without monochromatic singletons has a monochromatic component of order at least $7$.

\medskip

It remains to show that $7$ is attainable. Represent the points of the Fano plane by the non-zero vectors of $\mathbb F_2^3$, and let $e_1,e_2,e_3$ be a basis. Color the points $e_1,e_2, e_3$ red, together with the four lines $\{e_1,e_2,e_1+e_2\},~
\{e_2,e_3,e_2+e_3\},~\{e_3,e_1,e_3+e_1\},~
\{e_1,e_2+e_3,e_1+e_2+e_3\}.$

\begin{center}
\includegraphics{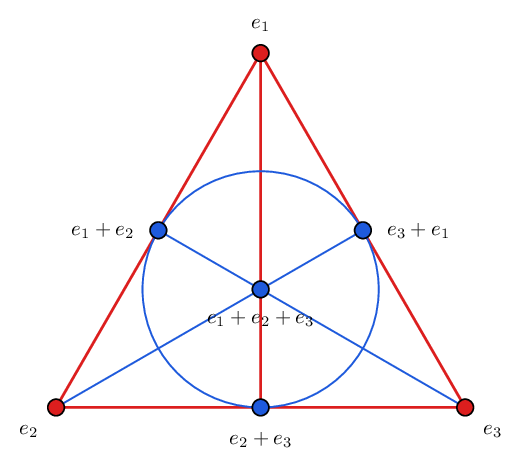}\end{center}


Color all remaining vertices blue. The red induced subgraph is connected on seven vertices, and so is the blue induced subgraph. Thus both monochromatic components have order exactly $7$.

Therefore the Heawood graph shows that any universal bound for cubic graphs with a perfect matching is at least $7$.
\end{proof}

A natural question arises: what is the minimum value $r$ such that any cubic graph has a two-coloring such that every monochromatic component has radius at least $1$ and at most $r$. It is straightforward to prove that the Petersen graph shows that $r>1$ is necessary.


\begin{conjecture}\label{conj}
If $G$ is a cubic $2$-edge-connected graph, then there is a red-blue coloring of the vertices of $G$ such that every monochromatic component
 has radius at least $1$ and at most $2$. This is true up to 18 vertices.
\end{conjecture}

In the next step, we handle cubic graphs without a perfect matching. Let us recall the statement.

\medskip

\noindent {\bf Theorem~\ref{t:bal_EG}.} If $G$ is a cubic graph, then there is a red--blue vertex coloring of $G$ such that every monochromatic component has at least $2$ and at most $13$ vertices.

\medskip

\noindent {\it Proof of Theorem~\ref{t:bal_EG}.~} If $G$ has a perfect matching, then the statement follows from Theorem~\ref{thm:pm}. Suppose that $G$ has no perfect matching, and consider the Edmonds--Gallai decomposition $V(G)=A\cup B\cup C$.

Let $\mathcal O$ be the set of odd components of $G-B$, and form the bipartite graph with bipartition $(\mathcal O,B)$ in the natural way. By Theorem~\ref{e-g}\emph{(iii)}, it has a matching saturating $B$. For $O\in\mathcal O$, let
$d_B(O)=|E(O,B)|$.
Since $G$ is cubic and $|V(O)|$ is odd, $d_B(O)$ is odd for each $O\in\mathcal{O}$. Among all matchings saturating $B$, choose one, say $M$, which saturates as many components $O$ with $d_B(O)\ge3$ as possible. We claim that every such component is saturated.

Suppose otherwise, and let $O_0$ be an unsaturated component with $d_B(O_0)\ge3$. Perform an alternating search from $O_0$. 
This means selecting an unsaturated edge from $O_0$ to $b\in B$, then selecting a saturated edge of the matching from $b$, then again selecting an unsaturated edge from the next odd component etc. It corresponds to an alternating path, if we contracted the odd components to single vertices. 
Let $\mathcal S$ be the set of reached odd components and let $T\subseteq B$ be the set of reached vertices of $B$. Since $M$ saturates $B$, $|T|=|\mathcal S|-1$ follows.

If some component $O\in\mathcal S\setminus{O_0}$ had $d_B(O)=1$, then switching $M$ along an alternating sequence from $O_0$ to $O$ would still saturate $B$, while saturating one more component with at least 3 edges to $B$ gives a contradiction. Hence every component in $\mathcal S$ has at least three edges to $B$. Moreover, by the definition of the alternating search, all their neighbors in $B$ belong to $T$. Therefore $3|\mathcal S|
 \le |E(\mathcal S,T)|
 \le 3|T|
 =3(|\mathcal S|-1),$ a contradiction. Thus every unsaturated odd component $O$ satisfies
$
d_B(O)=1$.

For every odd component saturated by $M$, choose the corresponding edge $bv$, with $b\in B$ and $v\in O$. Since $O$ is hypomatchable, $O-v$ has a perfect matching; add these edges and $bv$ to $M$. Also add a perfect matching from every even component of $G-B$.

Now let $O$ be an unsaturated odd component. Let us denote the unique vertex adjacent to a vertex $b$ from $B$ with $w$. Delete $wb$. Since $O-w$ has a perfect matching $P$ and $w$ has two neighbors in $O$, choose one such neighbor $z$. Let $zu\in P$, and contract $wz$. Then
$
(P\setminus\{zu\})\cup\{(wz)u\}
$
is a perfect matching of the contracted copy of $O$.

After doing this for every unsaturated odd component, we obtain a graph with a perfect matching. Contract all edges of this perfect matching and call the resulting graph $H$. We have
$
\Delta(H)\le4.
$

Indeed, an ordinary matching edge gives a contracted vertex of degree at most four. At an exceptional vertex, the preliminary contraction $wz$ has degree at most three after the edge $wb$ has been deleted, and contracting its matching edge therefore again gives degree at most four.

By Theorem~\ref{hst4}, $H$ has a red--blue coloring in which every monochromatic component has at most six vertices. Lift this coloring through all contractions. Every ordinary vertex of $H$ becomes two adjacent vertices of the same color, while in each unsaturated odd component one exceptional vertex becomes three connected vertices of the same color. Hence there are no monochromatic singleton components.

Moreover, after the deleted edges $wb$ are removed, every unsaturated odd component is a connected component of the graph and contains exactly one exceptional contracted vertex. Consequently every monochromatic component has at most $2\cdot6+1=13$ vertices.

Finally, for each unsaturated odd component $O$, interchange red and blue on all vertices of $O$ if necessary, so that the endpoints of its unique deleted edge $wb$ have different colors. These color changes can be made independently, since the edge $wb$ is the only edge joining $O$ to the rest of the graph. Restore all deleted edges. They are now bichromatic, so no monochromatic components are joined.
Thus every monochromatic component has at least $2$ and at most $13$ vertices. \hfill \qed

\section{Generalized crumby}

Berke and Szabó \cite{bsz} study relaxations of proper two-colorings such that the order of the induced monochromatic components in one (or both) of the color classes is bounded by a constant. A coloring of a graph $G$ is called $(C_1, C_2)$-relaxed if every monochromatic component induced by vertices of the first (second) color is of order at most $C_1$ ($C_2$, respectively). They 
find a $(1, 22)$-relaxed coloring of any graph of maximum degree 3. However, their coloring allows singleton vertices in both colors. This is excluded in the generalized $\ell$-crumby coloring setup.

\begin{definition}\label{def:crumby}
    For an arbitrary graph $G$ and $\ell\ge 1$, a red--blue coloring of the vertices of $G$ ($V(G)=R~\dot\cup~B$) is an $\ell$-crumby coloring if the following three conditions hold: 
    \begin{enumerate}[label=(C\arabic*)]
  \item $\Delta(G[\B])\le 1$;
  \item $\delta(G[\R])\ge 1$;
  \item $G[\R]$ contains no path with $\ell$ edges.
\end{enumerate}
\end{definition}

Let us call a red--blue coloring which satisfies $(C1)$ and $(C2)$ a \emph{locally feasible} coloring. For clarity reasons, we divided the argument into smaller steps so that eventually the actual proof should be easily understandable. 

\begin{lemma}\label{lem:existence-local}
Every graph has a locally feasible coloring.
\end{lemma}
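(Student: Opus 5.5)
Since the statement is about arbitrary graphs, I would first note that isolated vertices cause trouble: an isolated vertex colored red violates (C2), so it must be blue. A blue isolated vertex has blue degree $0$, which (C1) allows. So I will prove the claim for every graph, treating isolated vertices this way, and otherwise argue by an extremal choice.

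\textbf{Extremal choice.} Among all red--blue colorings that satisfy (C2), I choose one minimizing a potential, e.g.\ the number of edges inside $G[\B]$ (monochromatic blue edges). Such colorings exist: color every isolated vertex blue and every non-isolated vertex red. Then (C2) holds because each red vertex has a neighbor, and that neighbor is non-isolated and hence red.

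\textbf{Local improvement.} Suppose some blue vertex $v$ has at least two blue neighbors, and try to recolor $v$ red. The number of blue--blue edges then drops by $\deg_{\B}(v)\ge 2$. Condition (C2) for $v$ itself is fine, since $v$ has neighbors. Condition (C2) for the other red vertices is unaffected, since red vertices only gain red neighbors. So the new coloring still satisfies (C2) and has a smaller potential, contradicting minimality. Hence $\Delta(G[\B])\le 1$, and the coloring is locally feasible.

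\textbf{Main obstacle.} Making the starting configuration valid turns out to be trivial; the real check is only that recoloring a blue vertex red can never break (C2), and it cannot. So there is no genuine obstacle. In fact the simpler observation \emph{all non-isolated vertices red, isolated vertices blue} already satisfies both (C1) and (C2) directly. I would present that one-line coloring as the proof and mention the potential argument only as a remark, since a similar potential argument will likely be reused later when (C3) must also be controlled.
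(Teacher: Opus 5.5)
The coloring you actually intend to present (isolated vertices blue, all other vertices red) is exactly the paper's proof. The paper additionally notes that coloring the endpoints of a maximal matching red and everything else blue also works.

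One caution about your potential-function remark. The step ``(C2) for $v$ itself is fine, since $v$ has neighbors'' is false: (C2) requires $v$ to have a \emph{red} neighbor. If all neighbors of $v$ are blue, recoloring $v$ alone creates a red singleton. The repair is to recolor $v$ together with one blue neighbor $u$. This removes $\deg_{B}(v)+\deg_{B}(u)-1\ge 2$ blue--blue edges, and $u$ and $v$ support each other. Here the slip is harmless, because your starting coloring already has no blue--blue edges, so the minimum potential is $0$. It would matter, though, if you reuse that argument later when (C3) must also be controlled.
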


\begin{proof}
Color every vertex belonging to a non-trivial connected component red and
every isolated vertex blue. Then the blue graph is independent and every red vertex has a red neighbor.

One can also consider a maximal matching. Color the vertices of matching edges red, and color the remaining vertices blue.
\end{proof}

The starting point of our coloring will be an \emph{extremal} feasible coloring.

\begin{definition}\label{def:ex}
    A locally feasible coloring is \emph{extremal} if the following two conditions hold:
    \begin{enumerate}[label=(E\arabic*)]
  \item maximizes the number $|E(\R,\B)|$ of red--blue edges;
  \item subject to (E1), maximize $|\R|$.
\end{enumerate}    
\end{definition}

Fix from now on a subcubic graph $G$ and consider an extremal coloring $\mathcal{C}$ of $V(G)$. 

\begin{definition}\label{def:red-core}
The \emph{red core} of an extremal coloring is
  $\K
  =G\bigl[\{v\in\R:d_{G[R]}(v)\ge 2\}\bigr]$.
A vertex of $\K$ is called \emph{internal} if it has degree two in $\K$, otherwise it is called \emph{external}.
A red neighbor $x$ of a core vertex $v$ is a \emph{private red leaf of $v$}
if $d_{G[R]}(x)=1$ and $N_{G[\R]}(x)=\{v\}$.
\end{definition}

The proof of our main result uses a well-known result of Haxell from \cite{Hax}.

\begin{theorem}[Haxell]\label{thm:Haxell} 
    Let $k$ be a positive integer, let $H$ be a graph with maximum degree at most $k$, and let $V(H)=V_1\cup \dots \cup V_n$ be a partition of the vertex set of $H$. If $|V_i|\ge 2k$ for each $i$, then $H$ has an independent set $\{ v_1,\dots,v_n\}$ where $v_i\in V_i$ for each $i$.
\end{theorem}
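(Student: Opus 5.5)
The statement is Haxell's independent transversal theorem, which the paper only cites. If I had to prove it, I would use Haxell's alternating-tree (augmentation) argument: induction on $n$, driven by a counting step that compares the $\ge 2k$ vertices per class with the at most $k$ neighbours of each chosen vertex.

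\textbf{Setup.} We may assume $|V_i|=2k$ for every $i$, by deleting surplus vertices. We may also assume each $V_i$ is independent, since deleting edges inside a class does not change which transversals are independent. By induction on $n$, the classes $V_1,\dots,V_{n-1}$ have an independent transversal, i.e.\ a partial independent transversal (PIT) $S$ of size $n-1$ that misses $V_n$. Among all PITs of size $n-1$ avoiding $V_n$ we fix an extremal one, ordered lexicographically by a potential described below. We then try to augment $S$ by a vertex of $V_n$. A vertex $u$ is \emph{dominated} by a set $D$ if $u$ has a neighbour in $D$. If some $u\in V_n$ is not dominated by $S$, then $S\cup\{u\}$ is the required transversal, so every vertex of $V_n$ has a neighbour in $S$.

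\textbf{Alternating tree.} Start with $\mathcal T_0=\{V_n\}$ and an empty set $X$ of witness vertices. Suppose the current tree of classes $\mathcal T$ has $t+1$ classes: the root $V_n$ and $t$ classes $V_{i_1},\dots,V_{i_t}$ whose $S$-vertices are $s_1,\dots,s_t$, with witnesses $x_1,\dots,x_t\in X$. Here $x_j$ lies in a class already in $\mathcal T$, $x_j$ is adjacent to $s_j$, and $x_j$ has no neighbour in $S$ other than $s_j$. The tree grows as follows.
\begin{itemize}
\item If every vertex in $\bigcup\mathcal T$ is dominated by $\{s_1,\dots,s_t\}\cup\{x_1,\dots,x_t\}$, then counting gives a contradiction: these $2t$ vertices dominate at most $2kt$ vertices, while $|\bigcup\mathcal T|\ge 2k(t+1)$.
\item Otherwise there is a vertex $y\in\bigcup\mathcal T$ not dominated by that set.
  \begin{itemize}
  \item If $y$ has no neighbour in $S$ at all, we swap: replace the $S$-vertex of $y$'s class by $y$. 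This may free $y$'s class and, moving back along the tree, lets us cascade the swaps up to the root, or it improves the potential.
  \item If $y$ is dominated by some $s\in S$ outside the tree, we add $s$'s class to $\mathcal T$ with witness $y$.
  \end{itemize}
\end{itemize}

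\textbf{Main obstacle.} The hardest step is the swap-cascade. Changing the $S$-vertex in one class can destroy the witness relations further up the tree, so one must show the procedure terminates. The standard remedy is to fix the potential $\bigl(|\mathcal T_1|,|\mathcal T_2|,\dots\bigr)$, the numbers of classes added at each level of the tree, and to choose $S$ extremal with respect to it, compared lexicographically. One then checks that any swap either finishes the augmentation or produces a PIT whose tree is lexicographically larger. Since there are only finitely many PITs, this is impossible, so the process must end in the counting contradiction or a successful augmentation. I would work this verification out carefully, since it is where the factor $2k$ (rather than $k$) is essential. Each tree class contributes two dominating vertices, its $S$-vertex and its witness, which is exactly what makes the bound $2kt<2k(t+1)$ go through.
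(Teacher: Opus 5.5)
The paper does not prove this theorem; it quotes it from Haxell, so your sketch has to stand on its own. Your overall strategy is the right one: induction on $n$, an alternating tree rooted at the uncovered class, and the count $2kt<2k(t+1)$. But the step you postpone is the whole difficulty, and your invariant is inconsistent with your growth rule.

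You require each witness $x_j$ to have $s_j$ as its \emph{only} neighbour in $S$. The count only gives some $y\in\bigcup\mathcal T$ with no neighbour among $s_1,\dots,s_t,x_1,\dots,x_t$. Such a $y$ may have two or three neighbours in $S$ outside the tree, and nothing lets you choose one with a unique $S$-neighbour. If uniqueness were available, no potential would be needed at all. The swapped-in vertex is adjacent to no witness, so after $s_j$ is replaced by $y$, the witness $x_j$ has no neighbour in the new transversal. It can then be swapped into its own class, and so on back to the root. The genuine case is exactly the one your invariant excludes by fiat: after the swap, $x_j$ still has another neighbour in the new transversal, and the cascade stalls. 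For this case you only assert that the level-size vector $(|\mathcal T_1|,|\mathcal T_2|,\dots)$ improves. You give no argument, and none is apparent, since the swap detaches $V(s_j)$ from the tree together with everything grown from it.

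A potential that does work is the following. Drop uniqueness, and call $(S;x_1,s_1,\dots,x_t,s_t)$ valid if
\begin{itemize}
\item $s_i\in S$ is adjacent to $x_i$;
\item $x_i\in V_n\cup V(s_1)\cup\dots\cup V(s_{i-1})$;
\item $x_i$ has no neighbour in $\{x_1,\dots,x_{i-1},s_1,\dots,s_{i-1}\}$.
\end{itemize}
Choose $S$ and a valid sequence so that the vector $(|N(x_1)\cap S|,\dots,|N(x_t)\cap S|,k+1,k+1,\dots)$ is lexicographically minimal. The count gives a tree vertex $z$ with no neighbour among the $x_i,s_i$.
\begin{itemize}
\item If $z$ has a neighbour in $S$, appending $z$ and that neighbour lowers the vector, contradicting minimality.
\item If $z\in V_n$ has no neighbour in $S$, then $S\cup\{z\}$ is an independent transversal.
\item Otherwise $z\in V(s_j)$ has no neighbour in $S$; replace $s_j$ by $z$. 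Since $z$ is adjacent to no $x_i$, no entry increases, and the $j$th entry drops by one.
\end{itemize}
In the last case, if $x_j$ still has a neighbour $s'$ in the new transversal, then $(x_1,s_1,\dots,x_{j-1},s_{j-1},x_j,s')$ is valid with a smaller vector. If not, swap $x_j$ into its class $V(s_{j'})$ with $j'<j$, or finish if $x_j\in V_n$. By validity $x_j$ is non-adjacent to $x_1,\dots,x_{j-1}$, so again no earlier entry increases while the $j'$th entry drops. Then repeat the same step with $x_{j'}$. The indices strictly decrease, so the minimal configuration must in fact yield an independent transversal.
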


Now, we give an outline of the proof. The extremality of the initial coloring infers some strong properties of the red core. 
Then we would like to perform some local recoloring at the internal vertices which would shorten the longest path of the corresponding red component while maintaining the extremality of the coloring. However, it may happen that after this recoloring some red components merge together but the placement of these merges can be controlled. Moreover these conflicts can be encoded in a conflict graph for which we can apply the result of Haxell to find a simultaneous non-conflicting choice of local recolorings.
Finally, we show that with these steps we can achieve that the longest path in any red component have length at most $88$.



The first lemma is the structural heart of the proof. 

\begin{lemma}\label{lem:maxcut-normal-form}
Let $G$ be a subcubic graph and let $V(G)=\R~\dot\cup~\B$ be extremal. Then:
\begin{enumerate}[label=(\roman*)]
  \item $\Delta(\K)\le 2$;
  \item every internal vertex of $\K$ has degree three in $G$;
  \item if $v$ is internal with core-neighbors $p,q$, then its third
  neighbor is a private red leaf $x$.
\end{enumerate}
In particular, no internal core vertex has a blue neighbor.
\end{lemma}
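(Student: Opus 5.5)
Throughout, the tool is a single-vertex recoloring. Recoloring a red vertex $v$ blue is compared against the extremal coloring $\mathcal C$: we check that (C1) and (C2) survive, and then either $|E(\R,\B)|$ goes up, contradicting (E1), or it stays the same while $|\R|$ drops, which we then have to handle against (E2). The key observation is this. If a red vertex $v$ has $a$ red and $b$ blue neighbours and we turn it blue, the cut changes by $a-b$. Since $G$ is subcubic, a core vertex has $a\ge 2$, so $b\le 1$ and the cut changes by at least $+1$ whenever the move is legal. So the whole proof reduces to showing that \emph{some} legal recoloring exists near $v$ in each bad configuration.

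For (i), let $v\in\K$ have three red neighbours and recolor it blue. Then $v$ has no blue neighbour, so (C1) holds at $v$, and (C1) is unaffected elsewhere. The cut grows by $3$. The only danger is (C2): some red neighbour $u$ of $v$ may have $v$ as its only red neighbour. For each such $u$ I will also recolor $u$ blue. Then $u$ and $v$ form a blue edge, and $u$'s other neighbours were blue before. Here a problem appears: if $u$ has a blue neighbour $w$, then $u$ has blue degree at least $2$, violating (C1). So instead I use the fact that $u$ is a leaf with two other neighbours, and argue as follows. If $u$ has blue neighbours, then $u$ itself was red with $a=1$ and $b\ge 1$. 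I choose to recolor only $v$ and handle the stranded leaves by recoloring a blue neighbour of $u$ red. That is messy, so the cleaner route is a counting argument: each of the at most three stranded leaves costs at most a bounded loss, which I compare with the $+3$ gain. I expect (i) to follow by careful bookkeeping of this kind. This is the step I expect to be the main obstacle, because fixing (C2) at stranded leaves can break (C1) at blue vertices. I would first try recoloring $v$ blue together with all of its private leaves that have no blue neighbours, and then show that the remaining configurations (leaves with blue neighbours) allow recoloring the leaf blue paired with $v$, with nonnegative net cut change.

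For (ii) and (iii), let $v$ be internal with core-neighbours $p$ and $q$. Since $p,q\in\K$, each has red degree at least $2$, so they survive losing $v$ and (C2) is preserved for them. Suppose first that $d_G(v)=2$, or that the third neighbour $x$ of $v$ is blue. Recoloring $v$ blue then changes the cut by $2-b\ge 1$ if $x$ is absent. If $x$ is blue, the cut change is $+1$ but $v$ gains blue neighbour $x$, and I must check (C1) at $x$. If $x$ already has a blue neighbour this fails, and I instead recolor $x$ red, turning $v$ into a vertex of red degree $3$ and contradicting (i), or else I compare cuts directly. So the case split is: $x$ is isolated in blue, so recoloring $v$ works; or $x$ has a blue partner $y$, so recoloring $x$ red changes the cut by $-(\text{red nbrs of }x)+(\text{blue nbrs})$, which needs a separate check using (E2). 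Finally, if $x$ is red, then $x\notin\K$ because $v$ has core degree $2$. Since $x$ is red and adjacent to $v$, it has red degree $1$, so $x$ is a private red leaf of $v$, which gives (iii). The statement that no internal vertex has a blue neighbour then follows directly from (ii) and (iii).
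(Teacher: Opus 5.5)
There are two genuine gaps, and the first comes from misreading (i).

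\textbf{Part (i).} The claim $\Delta(\mathcal K)\le 2$ concerns degrees \emph{inside the core}: no vertex has three neighbours in $\mathcal K$. You instead try to show that no core vertex has three \emph{red} neighbours. That stronger statement is false, and it even clashes with (iii), which says every internal vertex has exactly three red neighbours.

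For a concrete counterexample take $K_{3,3}$. A blue set of size at least three with blue degrees at most one must be a whole side, and that leaves the other side as isolated red vertices. So every locally feasible colouring has $|B|\le 2$ and cut at most $3|B|\le 6$. The value $6$ is attained exactly when two vertices of one side are blue. Then the third vertex of that side is a core vertex with three red leaves. So your bookkeeping with stranded leaves cannot be completed.

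Under the correct reading the proof is one line. The three neighbours of $v$ lie in $\mathcal K$, so each has red degree at least $2$ and keeps a red neighbour when $v$ turns blue. The move is feasible and gains $3$. You make exactly this observation for $p,q$ in your second part.

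\textbf{Parts (ii) and (iii).} The gap is the case where the third neighbour $x$ of $v$ is blue, has a blue partner $w$, and $d_G(x)=3$. The third neighbour $z$ of $x$ is then red, since $x$ already has a blue neighbour. Recolouring $x$ red alone changes the cut by $-2+1=-1$, so it gives no contradiction.

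Your fallback, ``turning $v$ into a vertex of red degree $3$ and contradicting (i)'', fails on two counts. The modified colouring is not extremal, so (i) says nothing about it. Also, red degree $3$ at an internal vertex is exactly the configuration (iii) asserts, not something forbidden.

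The missing move is to swap the colours of $v$ and $x$. Then:
\begin{itemize}
\item $v$ becomes an isolated blue vertex;
\item $x$ is supported by $z$;
\item $p$ and $q$ keep red neighbours;
\item $w$ merely loses its partner.
\end{itemize}
The edges $vp,vq,xw$ enter the cut, $xz$ leaves it, and $vx$ stays bichromatic. The net gain is $2$, contradicting (E1).

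In the subcase $d_G(x)=2$ your single recolouring of $x$ does work: the cut is unchanged and $|R|$ grows, contradicting (E2). This is slightly simpler than the paper's recolouring of $v,x,w$. The remaining cases are handled correctly: no third neighbour, $x$ isolated in blue, and $x$ red.
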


\begin{proof}
Suppose first that $v$ has three neighbors in $\K$.  Recolor $v$ blue.  Each
of its three red neighbors belongs to the core and therefore retains at least
one red neighbor after the recoloring.  The vertex $v$ has no blue neighbor.
Thus local feasibility is preserved, while all three edges incident with
$v$ become red--blue edges.  This contradicts \emph{(E1)}.  Hence
$\Delta(\K)\le 2$.

Let now $v$ be internal, with core-neighbors $p$ and $q$.  If $d_G(v)=2$,
then recoloring $v$ blue is locally feasible: both $p$ and $q$ retain a red
neighbor, and $v$ has blue degree zero.  The two edges $vp$ and $vq$ enter
the red--blue cut, contradicting \emph{(E1)}. Therefore we can assume $d_G(v)=3$. 

Let $x$ be the third neighbor of $v$.  If $x$ is red, then $x\notin V(\K)$,
because $v$ already has exactly two neighbors in $\K$.  Since $x$ is red
and adjacent to $v$, it follows that $\dR(x)=1$.  Thus $x$ is a private red
leaf of $v$.

It remains to rule out the case that $x$ is blue.  First suppose that $x$ is
isolated in $G[\B]$.  Recoloring $v$ blue remains locally feasible: $p$ and
$q$ retain red neighbors, while $v$ and $x$ form at most a blue edge.  The
edges $vp,vq$ enter the cut and $vx$ leaves it, for an increase of one,
contradicting \emph{(E1)}.

Now suppose $x$ belongs to a blue edge component $xw$.  Suppose first that $d_G(x)=3$,
and let $z$ be the third neighbor of $x$.  Since $x$ already has the blue
neighbor $w$, the blue degree bound forces $z$ to be red.  Exchange the
colors of $v$ and $x$.  The new blue vertex $v$ has only red neighbors, and
the new red vertex $x$ is supported by $z$.  The red vertices $p$ and $q$
retain red neighbors, and $w$ loses its blue neighbor.  Hence the exchange
is locally feasible.  The edges $vp,vq,xw$ enter the cut, the edge $xz$
leaves it, and $xv$ remains bichromatic.  The cut therefore increases by
two, again contradicting \emph{(E1)}.

The remaining case is $d_G(x)=2$. Consider the following recoloring: switch the color of $v$ from red to blue and switch the colors of $x$ and $w$ from blue to red.
This is locally feasible.  The new red vertices $x$ and $w$ support each other, $p$ and $q$ retain red neighbors, and $v$ become a singleton blue vertex, while removing $x,w$ from the blue
graph cannot violate the blue degree condition. 

Every neighbor of $w$ other than $x$ was red before the recoloring, because
$w$ already had the blue neighbor $x$. Hence the change in the size of the red--blue cut is $2-(d_G(w)-1)$ that is non-negative.
If $d_G(w)\le 2$ then the cut increases, contradicting \emph{(E1)}. If $d_G(w)=3$, then the size of the red--blue cut is unchanged but one red vertex is replaced by two red vertices, so $|\R|$
increases by one, contradicting \emph{(E2)}. Thus the third neighbor of $v$ cannot be blue, completing the proof.
\end{proof}

\begin{corollary}\label{cor:core-path-cycle}
Every component of the red core $\K$ is a path, a cycle, an isolated vertex, or
an edge.  Every internal vertex of a core path, and every vertex of
a core cycle has a private red leaf.
\end{corollary}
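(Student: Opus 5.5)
The corollary should follow directly from Lemma~\ref{lem:maxcut-normal-form}. We read off the component structure of $\K$ from the degree bound, and the private leaves from part (iii).

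First, by Lemma~\ref{lem:maxcut-normal-form}(i), $\Delta(\K)\le 2$. Every finite graph of maximum degree at most $2$ has components that are paths or cycles. This includes the degenerate paths: a single vertex (a path on one vertex) and a single edge (a path on two vertices). So each component of $\K$ is a path, a cycle, an isolated vertex, or an edge; the last two are listed separately only because they have no internal vertices.

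Second, we identify the internal vertices. By Definition~\ref{def:red-core}, a vertex of $\K$ is internal exactly when it has degree two in $\K$. In a path component on at least three vertices these are precisely the non-endpoint vertices. In a cycle component every vertex has degree two in $\K$, so every vertex is internal. Isolated vertices and edge components contain no internal vertex, so nothing needs to be shown for them.

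Third, we apply Lemma~\ref{lem:maxcut-normal-form}(ii)--(iii) to each internal vertex $v$ with core-neighbors $p,q$. By (ii), $d_G(v)=3$, and by (iii), the third neighbor $x$ is a private red leaf of $v$. This gives the second sentence of the corollary.

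There is no real obstacle here. The only point requiring care is that ``internal'' is defined via degree in $\K$ and not in $G[\R]$, so that cycle vertices indeed qualify. Any cycle component of $\K$ has length at least $3$, since $G$ is simple.
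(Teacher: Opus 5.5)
Your proof is correct and follows the paper's argument: the component structure comes from $\Delta(\K)\le 2$ in Lemma~\ref{lem:maxcut-normal-form}(i), and the private red leaves come from part (iii), since every vertex of a core cycle and every non-endpoint of a core path has degree two in $\K$ and is therefore internal. Your remarks on degenerate components and on measuring degree in $\K$ rather than in $G[\R]$ only spell out details that the paper's two-line proof leaves implicit.
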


\begin{proof}
The first assertion follows from Lemma~\ref{lem:maxcut-normal-form}\emph{(i)}.  The
second follows from part \emph{(iii)}, since every vertex of a core cycle is
internal.
\end{proof}


The private red leaf at an internal core vertex provides a local recoloring that
breaks the core while preserving the extremal cut value.

\begin{lemma}[Neutral switch lemma]\label{lem:neutral-switches}
Let $v$ be an internal core vertex, let $p,q$ be its core-neighbors, and let
$x$ be its private red leaf.  Then $d_G(x)\in\{2,3\}$.  

Moreover, every
vertex $y\in N_G(x)\setminus\{v\}$ is an isolated blue vertex of degree three.  For every such $y$, interchanging the colors of $v$ and $y$ preserves local feasibility, $|E(\R,\B)|$, and $|\R|$.
\end{lemma}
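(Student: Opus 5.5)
The plan is to test each claim against the extremality of $\mathcal C$. For each candidate local recoloring we check local feasibility, that is (C1) and (C2), and then count how $|E(\R,\B)|$ changes. Throughout we use three facts. First, $v$ has no blue neighbor (Lemma~\ref{lem:maxcut-normal-form}). Second, $p,q\in V(\K)$, so each has red degree at least $2$ and keeps a red neighbor after $v$ turns blue. Third, $x$ is a private red leaf, so $N_{G[\R]}(x)=\{v\}$ and every $y\in N_G(x)\setminus\{v\}$ is blue.

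\emph{Step 1: $d_G(x)\ge 2$.} Suppose $d_G(x)=1$ and recolor $x$ blue. Its only neighbor $v$ is red, so $x$ becomes an isolated blue vertex. The vertex $v$ still has the red neighbors $p,q$, and no other vertex is affected, so the coloring stays locally feasible. The edge $vx$ enters the cut, so $|E(\R,\B)|$ grows by one, contradicting (E1). Since $G$ is subcubic, $d_G(x)\in\{2,3\}$.

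\emph{Step 2: every $y\in N_G(x)\setminus\{v\}$ is an isolated blue vertex.} Suppose $y$ has a blue neighbor $w$. Then the blue degree bound forces any third neighbor $z$ of $y$ to be red. Note that $y\ne p,q$ and $y\notin N(v)$, because $v$ has no blue neighbor.

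Swap the colors of $v$ and $y$. We check local feasibility:
\begin{itemize}
\item $v$ becomes a blue vertex whose neighbors $p,q,x$ are all red, so it is isolated in the blue graph;
\item $y$ becomes red and is supported by $x$, and $x$ is in turn supported by $y$;
\item $p$ and $q$ keep a red neighbor;
\item $w$ only loses a blue neighbor.
\end{itemize}
Now count the cut. The edges $vp,vq,vx$ and $yw$ enter it, giving $+4$. The edge $yx$ and possibly $yz$ leave it, giving at most $-2$. The net change is at least $+2$, contradicting (E1). Hence $y$ is isolated in $G[\B]$, and all its neighbors other than $x$ are red.

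\emph{Step 3: the neutral swap.} Let $y$ be as in Step 2 and interchange the colors of $v$ and $y$. Local feasibility holds exactly as in Step 2, except that $w$ no longer exists. The neighbors of $y$ other than $x$ are red with $\dR\ge 1$. Each of them either keeps some red neighbor or gains $y$, so none becomes isolated in $G[\R]$.

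The cut changes by $+3$ from the edges at $v$, by $-1$ from $yx$, and by $-(d_G(y)-1)$ from the remaining edges at $y$. The net change is therefore $3-d_G(y)$. By (E1) this cannot be positive, so $d_G(y)=3$, and then the change is exactly $0$. Since one red vertex and one blue vertex trade colors, $|\R|$ is unchanged. So the swap preserves local feasibility, $|E(\R,\B)|$ and $|\R|$, as claimed.

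The only delicate point is Step 2. There one must pick the swap of $v$ and $y$ rather than the more naive recoloring of $x$, since making $x$ blue would give $y$ blue degree $2$. One must also verify that the red support of $x$, $y$, $p$ and $q$ survives the swap; the rest is edge counting.
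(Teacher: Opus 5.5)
Your proof is correct and follows essentially the paper's route: the same swap of $v$ and $y$, with the cut count split into the cases $d_{G[B]}(y)=1$ and $d_{G[B]}(y)=0$ rather than the paper's single formula $3-d_G(y)+2d_{G[B]}(y)$. Your Step~1 recolors only $x$ blue and gains one cut edge; this is a slightly leaner variant of the paper's recoloring of both $v$ and $x$, which gains two.
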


\begin{proof}
First, suppose to the contrary that $d_G(x)=1$.  Recolor both $v$ and $x$ blue.  The edge
$vx$ becomes a blue edge, $v$ has no other blue neighbor, and $p,q$ retain
red neighbors.  Thus local feasibility is preserved.  The edges $vp$ and
$vq$ enter the red--blue cut, while $vx$ changes from red--red to blue--blue.
The cut therefore increases by two, contradicting \emph{(E1)}.  Hence
$d_G(x)\ge 2$.

Fix $y\in N_G(x)\setminus\{v\}$.  Because $x$ is a private red leaf, $y$ is
blue. 
Recolor $v$ blue and $y$ red. The leaf $x$ loses the red neighbor $v$ but
gains the red neighbor $y$; the new red vertex $y$ is therefore supported
by $x$.  Removing $y$ from the blue graph cannot violate the blue condition,
and the new blue vertex $v$ has the three red neighbors $p,q,x$.  Hence the
switch is locally feasible.

The three edges incident with $v$ enter the cut.  The edge $xy$ leaves the
cut.  Among the other $d_G(y)-1$ edges at $y$, exactly $d_{G[B]}(y)$ enter the cut and $d_G(y)-1-d_{G[B]}(y)$ leave it. Thus
\[
\begin{aligned}
  \Delta |E(\R,\B)|
  &=3-1+d_{G[B]}(y)-(d_{G}(y)-1-d_{G[B]}(y))\\
  &=3-d_{G}(y)+2d_{G[B]}(y).
\end{aligned}
\]
Since $d_{G}(y)\le 3$ and $d_{G[B]}(y)\ge 0$, this quantity is non-negative.  By \emph{(E1)}, it must
be zero.  Consequently $d_{G}(y)=3$ and $d_{G[B]}(y)=0$.  Therefore $y$ is a degree-3 vertex in $G$ and isolated in $G[\B]$, and the displayed recoloring is cut-neutral. It clearly preserves $|\R|$.
\end{proof}

The previous lemma determines those vertices where such a neutral switch can be performed.

\begin{definition}\label{def:switch}
Let $v$ be an internal core vertex with private red leaf $x$.  A vertex
$y\in N_G(x)\setminus\{v\}$ is called a \emph{switch vertex for $v$}, and
$(v,y)$ is called a \emph{switch candidate}. We might also refer to $v$ as the \emph{center} to the switch vertex $y$.  Performing the switch candidate
$(v,y)$ means recoloring $v$ blue and $y$ red.
\end{definition}

By Lemma~\ref{lem:neutral-switches}, every internal core vertex has at least
one switch candidate.

\begin{lemma}\label{lem:switch-private}
A switch vertex is adjacent to the private red leaf of at most one internal core
vertex.  In addition, if $y$ is a switch vertex for $v$ with private leaf
$x$, then the two neighbors of $y$ other than $x$ are red vertices that are
neither internal core vertices nor private red leaves of internal core
vertices.
\end{lemma}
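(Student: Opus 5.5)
By Lemma~\ref{lem:neutral-switches}, every switch vertex $y$ is a blue vertex of degree three that is isolated in $G[\B]$. All three of its neighbors are therefore red. Write $N_G(y)=\{x,a,b\}$, where $x$ is the private red leaf of $v$. I plan to derive both assertions from this fact, together with the structure of internal core vertices in Lemma~\ref{lem:maxcut-normal-form} and the definition of a private red leaf. In particular, I will use that $d_{G[\R]}(x)=1$ and $N_{G[\R]}(x)=\{v\}$.

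For the first assertion, suppose $y$ is also adjacent to a private red leaf $x'$ of an internal core vertex $v'$, with $x'\ne x$. A leaf determines its center uniquely, so it suffices to rule out two distinct leaves among $N_G(y)$. I would try a direct recoloring and compare it with (E1) and (E2). One option is to perform the switch $(v,y)$, which is neutral by Lemma~\ref{lem:neutral-switches}, and then also recolor $v'$ blue. After the first switch, $x'$ has the red neighbor $y$, so $x'$ keeps red support, and $p',q'$ keep theirs. The vertex $v'$ becomes blue with three red neighbors, which adds $3$ cut edges and loses none. This would contradict (E1).

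I need to check that $v'$ has no blue neighbor after the first switch. The only newly blue vertex is $v$, so this could fail only if $v'$ is adjacent to $v$, that is, $v'\in\{p,q\}$. That case must be handled separately. There I would instead recolor $v$ and $v'$ blue simultaneously while keeping $y$ red. Then $x$ and $x'$ are supported by $y$, and $y$ is supported by them. I then count the cut change: the edges of $v$ and $v'$ contribute $4$ new cut edges, the edge $vv'$ becomes blue--blue, and the edges at $y$ leave the cut. I expect this count to be positive once the blue edge $vv'$ is checked to be isolated. That holds because the other neighbors of $v$ and $v'$ are red (Lemma~\ref{lem:maxcut-normal-form}). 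This also requires that $p$ or $q$, say the other core neighbor, keeps red support, which it does because its core degree is at least two.

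For the second assertion, I consider $a$ (symmetrically $b$), which is red. Suppose $a$ is an internal core vertex $u$. Then by Lemma~\ref{lem:maxcut-normal-form}(iii), the neighbors of $u$ are its two core neighbors and its private leaf, all red, and this contradicts $y$ being a blue neighbor of $u$. Suppose instead that $a$ is a private red leaf $x''$ of some internal $v''$. If $x''\ne x$, this is exactly the two-leaves situation excluded by the first assertion. If $x''=x$, there is a multi-edge, which is impossible in a simple graph.

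The main obstacle I expect is the case analysis in the first assertion, especially when $v$ and $v'$ are adjacent or share core neighbors. There the naive double recoloring must be checked carefully for local feasibility, namely that each core neighbor retains red support and that the blue components have maximum degree $1$. If the cut count there comes out neutral rather than increasing, I would fall back on (E2), for example by also recoloring a suitable blue vertex red.
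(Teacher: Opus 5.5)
Your recoloring is the paper's. In the end $v$ and $v'$ are blue and $y$ is red, and the cut changes by $+3$ when $v,v'$ are non-adjacent and by $4-3=+1$ when they are adjacent, contradicting \emph{(E1)}. Splitting the non-adjacent case into the neutral switch $(v,y)$ followed by recoloring $v'$ is only cosmetic. Your argument for the second assertion also coincides with the paper's. Since the adjacent count is already $+1$, the \emph{(E2)} fallback you mention is never needed.

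The step you leave open, red support of the core neighbors, is a real step and not a formality. A red vertex $c$ adjacent to both $v$ and $v'$ loses two red neighbors, and if its red degree were two it would become an isolated red vertex. In the non-adjacent case you simply assert that $p',q'$ keep their support. In the adjacent case your reason (``its core degree is at least two'') is false in general, because the other core neighbor of $v$ may be a core endpoint of core degree one. The missing observation is the paper's: such a common neighbor $c$ has red degree at least two (it is adjacent to the red vertices $v,v'$), so it lies in the red core, and its core neighbors include $v$ and $v'$. By Lemma~\ref{lem:maxcut-normal-form}\emph{(i)} it is internal, so by \emph{(iii)} it has its own private red leaf. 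That leaf differs from $x,x'$ and is not recolored. Every other core neighbor of $v$ or $v'$ that stays red loses only one red neighbor, and it had red degree at least two because it is in the core. This closes both cases, including the triangle case where $v$, $v'$ and their common neighbor form a core cycle, and with it your proof is complete.
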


\begin{proof}
Suppose that a switch vertex $y$ is adjacent to private red leaves $x_v$ and
$x_w$ belonging to two distinct internal core vertices $v$ and $w$.
Recolor $y$ red and recolor $v,w$ blue.

The leaves $x_v,x_w$ are now supported by $y$.  Every core-neighbor of $v$ or
$w$ loses at most one red neighbor, except possibly a red vertex adjacent to
both $v$ and $w$.  Such a common core-neighbor, if it exists, has degree two
in the core and is therefore internal. By
Lemma~\ref{lem:maxcut-normal-form}\emph{(iii)}, it has its own private red leaf neighbor and remains
non-isolated. The new blue vertices $v,w$ are either isolated or, if they are
adjacent, form a single blue edge again by Lemma~\ref{lem:maxcut-normal-form}\emph{(iii)}.  Thus the recoloring is locally feasible.

If $v$ and $w$ are non-adjacent, recoloring them blue adds six edges to the
red-blue cut.  If they are adjacent, their common edge changes from red--red to blue--blue and the other four incident edges enter the red-blue cut, so the increase from $v,w$ is four.  By Lemma~\ref{lem:neutral-switches}, $y$ is an isolated degree-3 blue vertex with three red neighbors, so recoloring $y$ red removes three edges from the red-blue cut.  The total cut change is therefore $+3$ or $+1$, in either
case contradicting \emph{(E1)}.  Hence a switch vertex cannot serve two distinct
internal core vertices.

Finally, Lemma~\ref{lem:maxcut-normal-form} says that every neighbor of an
internal core vertex is red, so a blue switch vertex cannot be adjacent to
an internal core vertex.  The first part of the proof excludes every private
red leaf of an internal core vertex other than its own leaf $x$.  This proves
the second assertion.
\end{proof}


The external red neighbors of a switch vertex can only lie near the ends of
old red-core paths.  We package those possible attachment points into
terminal regions.

\begin{definition}\label{def:terminal-region}
Let $C$ be a connected component of $G[\R]$.  Let $I(C)$ be the set of
vertices of $C$ that are internal in the red core, and for each $v\in I(C)$
let $x_v$ denote the private red leaf of $v$.  Define the \emph{terminal
region} of $C$ by
\[
  \T(C)
  =V(C)\setminus\bigcup_{v\in I(C)}\{v,x_v\}.
\]
For a nontrivial core path, $\T(C)$ is the union of the two endpoint stars.
For a core cycle it is empty.  If the core of $C$ has at most two vertices,
or if $C$ is a red $K_2$, the definition retains the corresponding small
component.  The two endpoint stars of one red component are deliberately
regarded as one terminal region.
\end{definition}

A blue vertex is said to be \emph{incident with} a terminal region if it has
a neighbor in that region.

\begin{lemma}\label{lem:terminal-bound}
Every terminal region is incident with at most eight blue vertices.
Furthermore, if $y$ is a switch vertex for an internal core vertex $v$ with
private red leaf $x_v$, then every neighbor of $y$ other than $x_v$ lies in a
terminal region. 
\end{lemma}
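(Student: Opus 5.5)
The second assertion is essentially Lemma~\ref{lem:switch-private} rephrased, so I will prove it first and then turn to the counting bound.

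\textbf{Second assertion.} Let $y$ be a switch vertex for $v$, and let $z$ be a neighbor of $y$ other than $x_v$. By Lemma~\ref{lem:switch-private}, $z$ is red and is neither an internal core vertex nor a private red leaf of an internal core vertex. Let $C$ be the red component containing $z$. Then
\[
z\notin \bigcup_{u\in I(C)}\{u,x_u\},
\]
so $z\in\T(C)$ by Definition~\ref{def:terminal-region}.

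\textbf{Counting bound: reduction.} I will bound the total number of blue-incident edge ends leaving $\T(C)$, split according to the shape of the core of $C$ given by Corollary~\ref{cor:core-path-cycle}.

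First, only terminal vertices can contribute. Internal core vertices have no blue neighbors by Lemma~\ref{lem:maxcut-normal-form}. A private leaf $x_u$ has red degree one, so its blue neighbors are the switch vertices of $u$. These are adjacent to $x_u$, but $x_u\notin\T(C)$, so they need not be counted.

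Next, the vertices of $\T(C)$ have limited blue degree. Each has degree at most $3$. A core endpoint $a$ of a core path has core degree $1$ and red degree at least $2$, so it has at most one blue neighbor. A red leaf hanging on $a$ (a vertex with $\dR=1$ adjacent to $a$) has at most $2$ blue neighbors.

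\textbf{Counting bound: nontrivial core path.} The endpoint star at $a$ consists of $a$ together with its red leaves. Since $a$ has core degree $1$ and $d_G(a)\le 3$, it has at most two red leaves. If $a$ has two red leaves, then $a$ has no blue neighbor, and the star sends at most $0+2+2=4$ edges to blue vertices. If $a$ has one red leaf, the count is at most $1+2=3$. So each endpoint star sees at most $4$ blue vertices, and the two stars together see at most $8$. This is where the number $8$ comes from.

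\textbf{Counting bound: small cores.} I then check the remaining cases directly.
\begin{itemize}
\item If the core is a single vertex $a$, then $a$ has red degree $2$ or $3$, with red leaves carrying at most $2$ blue edges each. This gives at most $3\cdot 2=6$ blue edges, or $2\cdot 2+1=5$.
\item If the core is an edge $ab$, both $a$ and $b$ are endpoints, and the bound is $4+4=8$ as above.
\item If $C$ is a red $K_2$, it has at most $2+2=4$ blue edges.
\item If the core is a cycle, then $\T(C)$ is empty.
\end{itemize}

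\textbf{Expected obstacle.} The main difficulty is a red leaf that is adjacent to two core vertices, or that lies in both endpoint stars. By definition a red leaf has only one red neighbor, so it cannot belong to two stars. The stars are therefore disjoint, and the counts above do not double-count. Counting blue vertices rather than edges can only decrease the totals, so every terminal region is incident with at most $8$ blue vertices.
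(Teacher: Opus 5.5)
Your proof is correct and follows essentially the same route as the paper. The second assertion is read off from Lemma~\ref{lem:switch-private} and Definition~\ref{def:terminal-region}, and the bound of eight comes from the same endpoint-star count (at most $4$ per star, split by whether the endpoint has one or two red leaves), with the single-vertex, single-edge, red $K_2$ and cycle cases checked directly; your remarks that only vertices of $\T(C)$ matter and that the two stars are disjoint are harmless clarifications the paper leaves implicit.
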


\begin{proof}
Consider first an endpoint $c$ of a core path.  Let $l(c)$ be the number of
red leaves adjacent to $c$.  Since $c$ has one core-neighbor and $G$ is
subcubic, $l(c)\in\{1,2\}$.  If $l(c)=1$, then $c$ has at most one blue
neighbor and its red leaf has at most two blue neighbors, so the endpoint
star is incident with at most three blue vertices.  If $l(c)=2$, then $c$
has no blue neighbor and the two red leaves have at most four blue neighbors
in total.  Hence each endpoint star is incident with at most four blue
vertices, and the terminal region of a core path is incident with at most
eight.

The remaining small-core cases satisfy the same bound directly.  If the core
is a single vertex, the whole component is incident with at most six blue
vertices.  If the core is a single edge, the two endpoint-star estimates give
at most eight.  A red $K_2$ is incident with at most four blue vertices.
A core cycle has empty terminal region.

For the second assertion, Lemma~\ref{lem:switch-private} excludes internal
core vertices and private leaves of internal core vertices from the two red
neighbors of $y$ other than $x_v$.  Every remaining red vertex belongs to a
terminal region by Definition~\ref{def:terminal-region}.
\end{proof}

We now define the conflict graph, which we later use in the simultaneous neutral switch selection.
Let $\mathcal B$ be any family of pairwise disjoint blocks of internal core
vertices.  For a block $B\in\mathcal B$, let
\[
  A_B=\{(v,y): v\in B \text{ and } y \text{ is a switch vertex for }v\}
\]
be the class of switch candidates available in $B$.

\begin{definition}\label{def:conflict}
The \emph{conflict graph} $H$ has the switch candidates in
$\bigcup_{B\in\mathcal B}A_B$ as vertices.  Distinct candidates $(v,y)$ and
$(v',y')$ are adjacent in $H$ if the blue vertices $y$ and $y'$ are
incident with the same terminal region.
\end{definition}

The numerical values of the next statement are tailor-made for our calculations.

\begin{lemma}\label{lem:conflict-degree}
If every block in $\mathcal B$ contains $28$ internal core vertices, then $|A_B|\ge 28$ for every $B\in\mathcal B$, and the conflict graph has maximum degree at most $14$.
\end{lemma}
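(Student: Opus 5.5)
The bound $|A_B|\ge 28$ should follow directly from what we already have. By Lemma~\ref{lem:neutral-switches}, every internal core vertex $v$ has at least one switch candidate $(v,y)$. Candidates with distinct centers $v$ are distinct elements of $A_B$. Since $B$ contains $28$ internal core vertices, $A_B$ has at least $28$ elements.

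For the degree bound I fix a candidate $(v,y)$ and count its neighbors in $H$ in three steps.

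\emph{Step 1: $y$ is incident with at most two terminal regions.} The vertex $y$ is a degree-three vertex, with neighbors $x_v$ and two further vertices. By Definition~\ref{def:terminal-region}, the private leaf $x_v$ is removed from $\T(C)$, so $x_v$ lies in no terminal region. By the second part of Lemma~\ref{lem:terminal-bound}, each of the other two neighbors lies in some terminal region. Hence $y$ is incident with at most two terminal regions.

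\emph{Step 2: each region contributes at most seven other candidates.} By the first part of Lemma~\ref{lem:terminal-bound}, each of these regions is incident with at most $8$ blue vertices $y'$. One of these is $y$ itself, leaving at most $7$ others per region.

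\emph{Step 3: each blue vertex carries at most one candidate.} This is the step needing care. A neighbor of $(v,y)$ in $H$ is a candidate $(v',y')$, so I must bound how many candidates share a given switch vertex $y'$. By Lemma~\ref{lem:switch-private}, $y'$ is adjacent to the private red leaf of at most one internal core vertex. So $y'$ is the switch vertex of at most one center $v'$, and there is at most one candidate of the form $(v',y')$. In particular, the only candidate with switch vertex $y$ is $(v,y)$ itself, so $y$ contributes no neighbor. Each other blue vertex $y'$ contributes at most one neighbor.

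Combining the three steps gives at most $7+7=14$ neighbors of $(v,y)$ in $H$.

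The main obstacle is Step 1: ensuring that $x_v$ does not also place $y$ in a third region. This is settled by the exclusion of private leaves of internal vertices from $\T(C)$ in Definition~\ref{def:terminal-region}.
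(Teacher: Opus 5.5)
Your proof is correct and follows the paper's argument. The vertex $y$ is incident with at most two terminal regions, each region contributes at most seven other blue vertices, and by Lemma~\ref{lem:switch-private} each blue vertex carries at most one candidate, giving $7+7=14$; you also make explicit two points the paper leaves implicit (that $x_v$ lies in no terminal region, and the one-candidate-per-switch-vertex step), and your count $|A_B|\ge 28$ via distinct centers is an equivalent, slightly simpler version of the paper's count via distinct switch vertices.
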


\begin{proof}
Every internal core vertex has at least one switch candidate by
Lemma~\ref{lem:neutral-switches}, and by
Lemma~\ref{lem:switch-private} a switch vertex cannot belong to two distinct
internal core vertices. Hence each block of $28$ centers gives at least $28$ distinct
candidate vertices.

Fix a candidate $(v,y)$.  Besides the private leaf of $v$, the degree-3 switch
vertex $y$ has two red neighbors.  By Lemma~\ref{lem:terminal-bound}, they
lie in at most two terminal regions.  Each terminal region is incident with
at most eight blue vertices, hence with at most eight candidate switch
vertices.  Through a fixed terminal region, $(v,y)$ therefore conflicts with
at most seven other candidates.  Consequently $d_H((v,y))\le 7+7=14$.
\end{proof}


We next place blocks of $28$ internal core vertices.  The placement is chosen
only to make the final path count transparent.

\begin{definition}\label{def:block-placement}
For each component of the red core, place blocks as follows.
\begin{enumerate}[label=(B\arabic*)]
  \item Let $P$ be a core path with $n$ internal vertices.
  \begin{enumerate}[label=(\alph*)]
    \item If $n\le 27$, place no block.
    \item If $28\le n\le 55$, place one block of $28$ consecutive internal
    vertices so that the numbers of uncovered internal vertices at the two
    ends differ by at most one.
    \item If $n\ge 56$, write
      $n=28t+r,
     0\le r<28,
     t\ge 2$.
    
    Place $t$ disjoint consecutive $28$-blocks so that the first block begins
    at the left end of the internal-vertex sequence, the last block ends at
    the right end, and the $r$ uncovered internal vertices form one central
    run.
  \end{enumerate}

  \item Let $Q$ be a core cycle on $m$ vertices.  If $m\le 87$, place no
  block.  If $m\ge 88$, write $m=28t+r$ with $0\le r<28$ and place $t$
  consecutive $28$-blocks around the cycle, leaving one uncovered run of
  $r$ vertices.
\end{enumerate}
The family of these fixed blocks is denoted by $\mathcal B$.
\end{definition}

Now, we prove that a non-conflicting neutral switch selection is available.

\begin{lemma}\label{lem:independent-selection}
There is a choice of one switch candidate from every block such that no two
chosen switch vertices are incident with the same terminal region.
Moreover, among the chosen centers no three consecutive vertices occur on a
core path or core cycle.
\end{lemma}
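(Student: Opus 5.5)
The first assertion follows from Theorem~\ref{thm:Haxell} applied to the conflict graph $H$ of Definition~\ref{def:conflict}, with the partition into the classes $A_B$, $B\in\mathcal B$. By Lemma~\ref{lem:switch-private} a switch vertex serves at most one internal core vertex, and the blocks in $\mathcal B$ are pairwise disjoint. Hence the classes $A_B$ are pairwise disjoint. If some switch candidates do not lie in any block, we simply do not include them in $H$, so the sets $A_B$ partition $V(H)$.

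By Lemma~\ref{lem:conflict-degree} we have $\Delta(H)\le 14$ and $|A_B|\ge 28=2\cdot 14$. Theorem~\ref{thm:Haxell} with $k=14$ therefore yields an independent transversal $\{(v_B,y_B):B\in\mathcal B\}$. Independence in $H$ means precisely that no two chosen switch vertices $y_B,y_{B'}$ are incident with the same terminal region. This gives the first claim. If $H$ has maximum degree smaller than $14$, the hypothesis of Theorem~\ref{thm:Haxell} with $k=14$ still applies, since it only requires maximum degree at most $k$.

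For the second assertion the plan is to exploit the fact that exactly one center is chosen in each block. Two chosen centers from distinct blocks can be consecutive on a core path or cycle only if they sit at the adjacent ends of two neighbouring blocks. Since each block contributes a single center, three consecutive chosen centers would require three distinct blocks whose chosen vertices occupy three consecutive positions. The middle block would then be a block of $28$ vertices containing only one chosen vertex, and that vertex would have to be adjacent to both neighbouring blocks at once. This is impossible: a block of $28\ge 2$ consecutive vertices has distinct left and right ends, so its single chosen vertex cannot be simultaneously the last vertex adjacent to the previous block and the first vertex adjacent to the next one.

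The only subtlety is the cycle case (B2) with few blocks. If $t=1$, the block could in principle be adjacent to itself around the cycle, but (B2) requires $m\ge 88$, so $t\ge 3$ and the three blocks involved are distinct. Thus among the chosen centers no three consecutive vertices occur. The main (mild) obstacle is checking that the candidates outside blocks are correctly excluded and that Lemma~\ref{lem:conflict-degree} applies to the restricted conflict graph. Both hold because restricting to an induced subgraph does not increase degrees.
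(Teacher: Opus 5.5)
Your proof is correct and follows the paper's argument: Haxell's theorem with $k=14$, applied to the conflict graph partitioned into the classes $A_B$ (using $|A_B|\ge 28$ and $\Delta(H)\le 14$), gives the non-conflicting selection. For the second claim, each $28$-vertex block contributes exactly one center, which is the paper's observation that three consecutive core vertices meet at most two blocks; your extra checks (disjointness of the $A_B$, and $t\ge 3$ for blocked cycles) are harmless additions.
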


\begin{proof}
By Lemma~\ref{lem:conflict-degree}, we know that $|A_B|\ge 28$ for every $B\in \mathcal B$, and $\Delta(H)\le 14$.
Since $28=2\cdot14$, we can apply, Haxell's result, Theorem~\ref{thm:Haxell} that gives an independent transversal in $H$.
By Definition~\ref{def:conflict}, its independence means precisely that no two chosen switch vertices are incident with the same terminal region.

Each block contributes only one chosen center and has $28$ consecutive core
vertices.  Therefore three consecutive core vertices cannot all be chosen:
any three consecutive vertices meet at most two blocks, while each block
contributes at most one center.
\end{proof}

Now, we show that if we perform these switches simultaneously then the new coloring remains locally feasible.

\begin{lemma}\label{lem:simultaneous-feasibility}
Perform simultaneously all switches selected in
Lemma~\ref{lem:independent-selection}.  The resulting coloring is locally
feasible.
\end{lemma}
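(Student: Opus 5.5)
We have to check conditions (C1) and (C2) after performing all selected switches at once. The set of changed vertices is the set $S$ of chosen centers $v$, which turn blue, together with their switch vertices $y$, which turn red. The plan is to examine every vertex whose color or neighborhood coloring has changed, and show that each of them still satisfies the local condition for its new color. The key facts are Lemma~\ref{lem:maxcut-normal-form}, Lemma~\ref{lem:neutral-switches}, Lemma~\ref{lem:switch-private}, Lemma~\ref{lem:terminal-bound}, and the two properties guaranteed by Lemma~\ref{lem:independent-selection}.

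\emph{Blue side (C1).} We check the new blue vertices and the remaining old blue vertices separately.
\begin{itemize}
\item A new blue vertex is a chosen center $v$. Its neighbors are its core-neighbors $p,q$ and its private leaf $x_v$, all originally red. Among these, only a core-neighbor can turn blue, and only if it is itself a chosen center; switch vertices are blue to begin with, and private leaves never change color. Since no three consecutive core vertices are chosen, $v$ has at most one blue neighbor.
\item An old blue vertex that stays blue is not adjacent to any center, because Lemma~\ref{lem:maxcut-normal-form} says internal core vertices have only red neighbors. So its blue neighborhood can only shrink.
\end{itemize}
Hence $\Delta(G[\B])\le 1$.

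\emph{Red side (C2).} We go through every red vertex that might lose support or that is newly red.
\begin{itemize}
\item A new red vertex $y$ has the private leaf $x_v$ of its center $v$ as a neighbor. The leaf $x_v$ stays red, so $y$ is supported.
\item The leaf $x_v$ loses $v$ but gains $y$. By Lemma~\ref{lem:switch-private}, $x_v$ is a private leaf for only one center, so it loses no other support.
\item A core-neighbor $p$ of a chosen center $v$ is either internal or external. If $p$ is internal and not itself chosen, it keeps its own private leaf as a red neighbor. If $p$ is external (an endpoint of a core path), it has at least two red neighbors, since $d_{G[R]}(p)\ge 2$. It can lose only $v$: its other red neighbors are its leaves or further core vertices, and core vertices adjacent to $p$ other than $v$ are not internal chosen centers on the same side. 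So $p$ retains a red neighbor.
\item The other two neighbors of a switch vertex $y$ are red and lie in terminal regions by Lemma~\ref{lem:terminal-bound}. They only gain a red neighbor.
\end{itemize}
Terminal-region vertices can lose red neighbors only when such a neighbor is a chosen center, and that case was handled above.

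The main obstacle is the interaction between simultaneous switches. The worst case is a red vertex adjacent to two chosen centers: an internal vertex between two centers, or an endpoint whose core-neighbor is a center. The plan is to handle this with the "no three consecutive centers" property, together with the fact that every internal core vertex carries a private leaf. This gives a uniform bound: each red vertex retains a red neighbor that is never recolored, namely its private leaf, one of its own leaves, or a neighbor that was never a center. The independence of the transversal is not needed for feasibility itself; it will be used later to control how red components merge.
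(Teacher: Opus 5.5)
Your proof is correct and follows the paper's argument essentially step for step: (C1) comes from centers having only red neighbors together with the no-three-consecutive property, and (C2) from $x_v$ being supported by $y_v$, unselected internal vertices keeping their private leaves, and core endpoints keeping a red leaf. Tighten one justification: an external core vertex adjacent to a center has that center as its \emph{unique} core-neighbor, so its other red neighbors are non-core leaves that are never recolored. Similarly, $x_v$ loses nothing else simply because $v$ was its only red neighbor, not because of Lemma~\ref{lem:switch-private}.
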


\begin{proof}
Let $S$ be the set of selected internal core vertices and let $Y$ be the set
of selected switch vertices.  Recolor every vertex of $S$ blue and every
vertex of $Y$ red.

We first check the blue condition.  By
Lemma~\ref{lem:maxcut-normal-form}, a selected center has only red neighbors
before the recoloring: its two core-neighbors and its private red leaf.  The
only possible blue neighbors it acquires are therefore selected adjacent
core vertices.  By Lemma~\ref{lem:independent-selection}, no three
consecutive core vertices are selected, so every selected center has at most
one selected core-neighbor.  Every selected switch vertex is removed from
the blue graph.  No other blue vertex gains a new blue neighbor.  Hence the
new blue graph has maximum degree at most one.

For the red condition, let $v\in S$ have private red leaf $x_v$ and selected
switch vertex $y_v\in Y$.  The leaf $x_v$ loses the red neighbor $v$ but gains the
red neighbor $y_v$, and $y_v$ is supported by $x_v$.  An unselected internal
core vertex that loses one or even both core-neighbors still has its private
red leaf.  A core endpoint that loses its unique core-neighbor still has at
least one red leaf, because it belongs to the red core.  All other old red
vertices retain a red neighbor.  Thus the new red graph has no isolated
vertices.
\end{proof}


We now show that the same simultaneous switches force all red paths to have
at most $88$ edges.

\begin{lemma}\label{lem:one-switch} 
After the simultaneous recoloring, every red component contains at most one
selected switch vertex.
\end{lemma}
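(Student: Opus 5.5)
We would show that two selected switch vertices can never be joined by a red path after the recoloring. Let $S$ be the selected centers and $Y$ the selected switch vertices. The new red graph is $G[(\R\setminus S)\cup Y]$.

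\textbf{Step 1: neighborhoods of selected switch vertices.} By Lemma~\ref{lem:neutral-switches}, each $y\in Y$ is a degree-3 vertex whose three neighbors were red before the switch. One of them is the private leaf $x_v$ of its own center $v$. By Lemma~\ref{lem:terminal-bound}, the other two lie in terminal regions. These neighbors remain red after recoloring, because $S$ consists only of internal core vertices and these neighbors are not internal core vertices (Lemma~\ref{lem:switch-private}).

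Lemma~\ref{lem:independent-selection} says that no two selected switch vertices are incident with the same terminal region. Hence the terminal-region neighbors of distinct $y,y'\in Y$ lie in distinct old red components, or more precisely in distinct terminal regions. The leaves $x_v$ and $x_{v'}$ are also distinct, by Lemma~\ref{lem:switch-private}. Moreover, $y$ and $y'$ are not adjacent, since both were isolated blue vertices.

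\textbf{Step 2: trace the new red components.} Every new red edge lies either inside the old red graph restricted to $\R\setminus S$, or is incident with some $y\in Y$. So a new red component consists of pieces of old red components, glued together through vertices of $Y$. Suppose a new red component contained two vertices $y,y'\in Y$, and take a shortest red path between them. Its interior avoids $Y$, so it runs inside a single old red component $C$ with the vertices of $S$ deleted. Its first vertex is a neighbor $a$ of $y$ and its last vertex is a neighbor $a'$ of $y'$, both in $C$.

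\textbf{Step 3: show the pieces separate.} We need that in $C-S$ the neighbors of $y$ and of $y'$ cannot lie in one piece. If $a$ and $a'$ are both in the terminal region $\T(C)$, this contradicts Lemma~\ref{lem:independent-selection}. So at least one of them, say $a$, is the private leaf $x_v$ of a center $v\in S$. After deleting $v$, the leaf $x_v$ has no red neighbor in $C$, since $N_{G[\R]}(x_v)=\{v\}$. Thus $x_v$ is isolated in $C-S$, and the path cannot continue from it. The same argument applies to $a'$.

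So $a,a'$ cannot both be terminal, and neither can be a leaf. This is a contradiction, and every red component contains at most one selected switch vertex.

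\textbf{Main obstacle.} The delicate point is Step 3 when $C$ is a core path whose two endpoint stars both touch switch vertices. We have to make sure that both ends counting as one region is exactly what Lemma~\ref{lem:independent-selection} forbids. This is why the two endpoint stars of a component were deliberately merged into one terminal region in Definition~\ref{def:terminal-region}. With that convention the argument closes without any case analysis on block positions.
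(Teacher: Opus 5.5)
Your proposal is correct and takes the paper's approach: a red connection between two selected switch vertices has to pass through the terminal region of a single old red component, and that region would then be incident with both vertices, contradicting Lemma~\ref{lem:independent-selection}. Your shortest-path reduction (the interior avoids $Y$, so it lies in one old component minus $S$) is a precise version of the paper's ``along such a chain'' step. One case is not literally covered by your isolation argument: the path $y\,x_v\,y'$, whose interior is the single vertex $a=a'=x_v$. Your Step~1 facts exclude it, since the neighbors of $y'$ other than $x_{v'}$ lie in terminal regions (which contain no private leaves of internal core vertices) and $x_v\neq x_{v'}$, so $x_v\notin N_G(y')$.
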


\begin{proof}
Let $y$ be a selected switch vertex for a selected center $v$, with private
leaf $x_v$.  The branch through $x_v$ cannot reach the old red core because
$v$ is now blue.  By Lemma~\ref{lem:terminal-bound}, every other red neighbor
of $y$ lies in a terminal region of an old red component.

Suppose a red component of the new coloring contained another selected
switch vertex $y'$.  Any red path from $y$ to $y'$ would have to leave $y$
through a terminal region of some old red component and eventually enter
$y'$ through a terminal region.  Along such a chain, some old terminal region
would be incident with two selected switch vertices.  This is excluded by
Lemma~\ref{lem:independent-selection}.  Hence no new red component contains
two selected switch vertices.
\end{proof}

\begin{lemma}\label{lem:path-accounting} 
After the simultaneous recoloring of
Lemma~\ref{lem:simultaneous-feasibility}, every red path has at most $88$
edges.
\end{lemma}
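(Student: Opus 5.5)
The plan is to bound any red path in the new coloring by decomposing it along the selected switch vertices. By Lemma~\ref{lem:one-switch}, a new red component contains at most one selected switch vertex $y$. Therefore a red path $P$ either avoids all vertices of $Y$, or passes through exactly one $y\in Y$. Hence $P$ consists of at most two segments, each a red path living inside the old red graph with all selected centers removed. We should also check that no other merging of old components can occur. New red vertices are only the vertices of $Y$, and removing vertices from $\R$ cannot join components. So every segment lies inside a single old red component $C$ minus $S$.

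The first step is to bound the length of a red path inside $C-S$ for one old component $C$. Such a path uses at most one maximal run of the core of $C$ between consecutive selected centers, together with at most two pendant leaves at its ends. Leaves are attached only at its two extremities, because private leaves have degree $1$ in $G[\R]$, and endpoint stars contribute at most one extra edge at each end. So I would bound the number of core vertices in a run between consecutive selected centers, or between a path end and the nearest selected center, using the placement in Definition~\ref{def:block-placement}.

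For case (B1)(b), the uncovered internal vertices at each end number at most $\lceil 27/2\rceil=14$. The chosen center lies in the block of $28$, so a run toward one end has at most $14+27$ internal vertices plus the endpoint core vertex. For (B1)(c), consecutive blocks of length $28$ with possibly the central run of $r\le 27$ between them give a gap of at most $27+27+27$ between chosen centers. For (B1)(a), the whole path has at most $27$ internal vertices plus two endpoints. Cycles in (B2) behave similarly, and short cycles ($m\le 87$) give paths of at most $86$ core edges plus two leaves. I then check that each single-component segment has at most about $88$ edges.

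The second step handles paths through $y$. The branch of $y$ through $x_v$ is the single edge $yx_v$, because $x_v$ is a leaf whose only other red neighbor $v$ is now blue. The other branches go into terminal regions. A terminal region is an endpoint star of a core path, and a path entering there proceeds into the run adjacent to that end. That run is bounded by the end-gap estimate, which is shorter than the middle gaps. Summing the two branches at $y$ plus the edges at $y$ should still stay within $88$. The main obstacle is exactly this accounting: making the constant $88$ come out needs the tight end-run bound from the balanced placement (at most $14+27$), rather than the crude gap bound. So I would carefully count edges as (end run through $y$) $+$ (end run) $+$ 2, possibly refining by noting that $y$'s two non-$x_v$ neighbors are in distinct or identical regions.
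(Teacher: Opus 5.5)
Your proposal is correct and follows the paper's proof essentially step by step. Segments inside a single old component minus the selected centers are bounded via the block placement (at most $30$, $43$, $29$, $82$ edges, and $86+2=88$ for an untouched core cycle), and a path through the unique selected switch vertex $y$ of its component is bounded by two terminal legs plus the two edges at $y$; the count you left implicit is exactly the paper's, namely a medium-path terminal leg has at most $1+41+1=43$ edges (endpoint leaf, then $14+27$ internal core vertices, then a final private leaf), giving $43+1+1+43=88$.
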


\begin{proof}
We first record the maximum lengths of pieces inherited from an old red
component.

Let a core path have $n$ internal core vertices.

If $n\le 27$, the path receives no block.  Its old red component has diameter
at most $n+3\le 30$. 
Indeed, the core has $n+2$ vertices and $n+1$ core edges, and a longest red
path can extend by at most one red-leaf edge at each end.

Suppose $28\le n\le 55$.  The block in the middle leaves at most $14$ uncovered
internal vertices at either end.  Wherever the selected center lies inside
the $28$-block, at most $14+27=41$
internal vertices lie between that center and either end of the core.  After
the selected center is recolored blue, every terminal red leg therefore has
length at most $43$ edges, including a possible red-leaf edge at each end of
the leg.

Suppose $n\ge 56$.  The first and last blocks start at the two ends of the
internal-vertex sequence.  Hence a selected center in an end block has at
most $27$ internal vertices between it and the corresponding core endpoint.
Every terminal red leg consequently has length at most $29$ edges.
Between two consecutive selected centers, the largest possible run of
unselected internal vertices consists of a suffix of one $28$-block, the
unique central uncovered run, and a prefix of the next $28$-block.  Its size
is therefore at most $27+27+27=81$.
Such an internal red piece contains no red path longer than $82$ edges, even
allowing a private red leaf at each end.

For a core cycle that receives blocks, the same argument gives runs of at
most $81$ unselected core vertices and hence red paths of at most $82$
edges.  An untouched core cycle has at most $87$ core vertices.  A simple
path can use at most $86$ cycle edges and can extend by one private red-leaf
edge at each end, for a total of at most $88$ edges.

Now consider an arbitrary red component after all switches.  If it contains
no selected switch vertex, it is contained in one of the pieces just bounded,
so every red path in it has at most $88$ edges.

If it contains a selected switch vertex $y$, then by
Lemma~\ref{lem:one-switch} it contains no other selected switch vertex.  The
private red leaf branch of $y$ has length one.  Each of the other at most two
branches of $y$ enters a terminal region of an old red component.  The
maximum distance available inside such a terminal piece is at most $43$:
this is the bound for a medium blocked core path, while the short and long
paths have smaller terminal bounds, $30$ and $29$, respectively.
A core cycle has empty terminal region and hence contributes no such branch.
Thus a longest red path using $y$ has at most $43+1+1+43=88$
edges.  This proves the lemma.
\end{proof}


Let us recall the main result of the paper which follows now from the previous observations.

\medskip

\noindent{\bf Theorem~\ref{thm:main88}.}\
{\it Every subcubic graph $G$ has an $88$-crumby coloring. In other words, there exists a red--blue coloring such that
$  \Delta(G[\B])\le 1,
  \delta(G[\R])\ge 1,
$
and every path in $G[\R]$ has at most $88$ edges, where $R$ is the set of red and $B$ is the set of blue vertices.
}

\begin{proof}
By Lemma~\ref{lem:existence-local}, locally feasible colorings exist.  Choose
an extremal one.  Lemma~\ref{lem:maxcut-normal-form} gives the path--cycle
structure of the red core and a private red leaf at every internal core
vertex.  Lemma~\ref{lem:neutral-switches} supplies at least one neutral switch
candidate at each internal core vertex, while
Lemmas~\ref{lem:switch-private} and \ref{lem:terminal-bound} implies that all switch vertices can only have red neighbors from terminal regions and each terminal region can have a bounded number of such incidences.

Place the blocks like in Definition~\ref{def:block-placement}.  By
Lemma~\ref{lem:conflict-degree} and Haxell's theorem, choose one
non-conflicting switch candidate from each block as in
Lemma~\ref{lem:independent-selection}. Perform all selected switches
simultaneously.  Local feasibility follows from
Lemma~\ref{lem:simultaneous-feasibility}, and the red-path bound follows from
Lemma~\ref{lem:path-accounting}.  Therefore every red path has at most $88$
edges.
\end{proof}

\begin{rem}
    The constant in Theorem~\ref{thm:main88} is not intended to be optimal. A further improvement is possible by encoding each switch candidate as an edge between the terminal regions incident with its two external neighbors and applying the corollary of Theorem 1.5 in a paper by Aharoni, Berger and Meshulam\cite{abm} instead of Haxell's independent transversal theorem. Since the resulting multigraph has maximum degree at most 8, blocks of 16 vertices suffice; the corresponding path count yields an upper bound of 52 edges. We have retained the Haxell formulation because it keeps the switch-selection argument in the simpler conflict-graph framework.
\end{rem}

\section{Discussion}

Besides Conjecture~\ref{conj}, there are natural questions that are left to answer in both topics. We list these problems to finish our paper.

\begin{problem}
    For any cubic graph $G$ that admits a perfect matching what is the minimum number $k$ for which there always exists a red--blue coloring of $V(G)$ such that the maximum order of a monochromatic component is at most $k$ while there are no monochromatic singletons?
\end{problem}

We proved that $k=12$ is enough but so far the lower bound on $k$ is $7$.

\begin{problem}
    Improve similarly the bound on $k$ for cubic graphs without a perfect matching.
\end{problem}

About the generalized crumby colorings, the most interesting question is how small $\ell$ can be that guarantees an $\ell$-crumby coloring for any subcubic graph. 

\begin{problem}
    Prove the existence of an $\ell$-crumby coloring for any subcubic graph for some $\ell<52$. Is it true that $\ell=4$ is enough?
\end{problem}


\section*{Acknowledgement}
This work was initiated in a private discussion between the first author and Carsten Thomassen a decade ago. The hospitality of the Technical University of Denmark and the generosity of Carsten Thomassen are gratefully acknowledged.

During the preparation of this work, the authors used ChatGPT 5.6 Sol  to improve the clarity of the proofs, in particular finding Proposition~\ref{prop:hea}. It also pointed us to the Berke-Szabó paper. Following the authorship guidelines of COPE and arXiv, this AI tool was used strictly as an assistant and is not listed as a co-author. The authors maintain sole responsibility for the originality, accuracy, validity, and integrity of the entire content, data, and conclusions presented in this manuscript.

The first author is partially supported by ERC Advanced Grants ``GeoScape" No.88271. and “ERMiD”, no. 101054936, and NKFIH Grant K.131529. The second author was supported at the initial stage of this work by the EKÖP-24-4-SZTE-609 Program which belongs to the Ministry for Culture and Innovation from the source of the National Research, Development and Innovation Fund. The second author is also supported by the National Research, Development and Innovation Fund of the Ministry for Innovation and Technology of Hungary under grant SNN 152582.

\end{document}